\documentclass{article}
\usepackage[table]{xcolor}
\usepackage{amsfonts}
\usepackage{amsmath}
\usepackage{amssymb}
\usepackage{amsthm}
\usepackage{graphicx}
\usepackage[hidelinks]{hyperref}
\usepackage{array}
\usepackage{geometry}
\usepackage{tikz}
\usetikzlibrary{positioning}
\usetikzlibrary{arrows.meta}
\usetikzlibrary{shapes.geometric}
\usetikzlibrary{decorations.pathmorphing}
\usetikzlibrary{decorations.markings}
\usepackage{enumitem}
\usepackage{subcaption}

\newtheorem{prop}{Proposition}
\newtheorem{lem}{Lemma}
\newtheorem{coro}{Corollary}
\newtheorem{theo}{Theorem}

\theoremstyle{definition}
\newtheorem{defix}{Definition}
\newenvironment{defi}
  {\pushQED{\qed}\defix}
  {\popQED\enddefix}
\newtheorem{exax}{Example}
\newenvironment{exa}
  {\pushQED{\qed}\exax}
  {\popQED\endexax}

\newtheorem{remx}{Remark}
\newenvironment{rem}
  {\pushQED{\qed}\remx}
  {\popQED\endremx}

  \title{Invariant Sphere Theorem and Ring-Coupled Systems}
\author{ Pedro Soares\\
Instituto Superior de Economia e Gest\~{a}o, Universidade de Lisboa, Portugal\\
psoares@iseg.ulisboa.pt\footnote{This work was supported by FCT through the project CEMAPRE/REM
UID/06522/2025.}}
\date{July 2026} 
\begin{document}

\maketitle

\abstract{In this work, we show how heteroclinic networks can arise in a simple class of network dynamical systems through the application of the Invariant Sphere Theorem. Ring-coupled systems are ODE networks in
$\mathbb{R}^n$ in which each variable $x_i$ interacts only with its predecessor $x_{i-1}$.
We derive conditions on the coefficients of a cubic polynomial that guarantee the existence of a globally attracting invariant sphere via the Invariant Sphere Theorem.
Moreover, setting one of these coefficients to zero, we identify conditions on the remaining coefficients that guarantee the existence of a heteroclinic network on the invariant sphere.
Focusing on the case $n=3$, we investigate perturbations of the vanishing coefficient in a neighbourhood of the heteroclinic network. Under such perturbations, the heteroclinic network is destroyed and periodic orbits emerge. When the original heteroclinic network is asymptotically stable, the resulting periodic orbits shadow the network. In one parameter regime, a unique attracting periodic orbit appears and shadows the entire heteroclinic network. By contrast, when the heteroclinic network is not stable, repelling periodic orbits arise that shadow only part of the heteroclinic structure, namely half of the heteroclinic connections.
Symmetry plays a fundamental role throughout the analysis. Exploiting the symmetries of the system, we reduce the heteroclinic network to two homoclinic orbits. Furthermore, the local dynamics near the heteroclinic network can be studied on a quotient space consisting of an annulus attached to a Möbius band, with each homoclinic orbit lying on one of these components. This reduction provides a geometric framework for understanding the bifurcations and the emergence of the periodic dynamics.
}

\section{Introduction}

Heteroclinic networks arise in dynamical systems possessing symmetry or constrained geometries.
Classical examples include evolutionary game dynamics on simplexes, where flow-invariant faces support sequences of saddle equilibria connected by heteroclinic orbits \cite{HS98, PR23}, and equivariant systems, where invariant subspaces and an invariant sphere organise global dynamics to create robust heteroclinic networks \cite{GH88, F07}.
A distinct setting in which heteroclinic behaviour emerges is that of coupled cell systems: network structured dynamical systems whose architecture constrains the admissible vector fields and enforces suitable invariant subspaces where the heteroclinic connections lie \cite{F15,BG24,GS26}.

In this work, we will explore the emergence of heteroclinic networks in ring-coupled systems.
The ring architecture is one of the simplest network structures, consisting of cells arranged in a circular topology with unidirectional coupling, Figure~\ref{fig:ringnetwork}.
In this case, the network structure does not force the existence of suitable invariant subspaces for robust heteroclinic networks. The heteroclinic networks that we will find are not robust under perturbations of the coupled system.
First, we will apply the invariant sphere theorem to ring-coupled systems to obtain an invariant sphere.
Second, we find a heteroclinic cycle of the same size as the ring by adding a symmetric condition that forms a heteroclinic network between the poles of the sphere.
Third, we perturb the $3$-ring-coupled system to study its bifurcations, including the destruction of the heteroclinic network.

First, we study how the invariant sphere theorem can be applied to ring-coupled systems.
The invariant sphere theorem refers to homogenous polynomials with odd degree.
So we will focus on homogeneous cubic polynomials respecting the ring structure.
This means that we will study coupled cell systems given by a cubic homogeneous polynomial
$$q(y,z)=a y^3+b y^2 z + c y z^2+d z^3,$$ since each cell receives only one input, the variable $y$ corresponds to the internal state of the cell and $z$ to the state of its input cell. 
The $n$-ring-coupled system given $q(x,y)$ is 
$$Q^N:\mathbb{R}^n\rightarrow \mathbb{R}^n$$
$$(Q^N)_i(x)=q(x_{i},x_{i+1})$$
for $i=1,\dots,n$, $x=(x_1,\dots,x_n)$ and $x_{n+1}:=x_1$.
And we will study the dynamics of the system
\begin{equation}\label{eq:edoring}
\dot{x}=\lambda x + Q^N(x),
\end{equation}
where $x=(x_1,x_2,\dots,x_n)\in\mathbb{R}^n$ and $\lambda>0$. 
These systems respect the $\mathbb{Z}_2\times\mathbb{Z}_n$ symmetry given by the actions $x\mapsto -x$ and the cyclic permutation of the coordinates. 
However, the class of ring-coupled systems is smaller than the class of $\mathbb{Z}_2\times\mathbb{Z}_n$ equivariant systems.
We will provide algebraic conditions on the coefficients $a,b,c,d$ that guarantee the existence of an attracting invariant sphere for $\lambda>0$ using the invariant sphere theorem.
Moreover, the invariant sphere theorem also states that the dynamic on this invariant sphere is topologically conjugated to the  dynamic on the unitary sphere induced by the following phase vector field $$\dot{u}= Q^N(u)-\langle Q^N(u),u\rangle u,$$
where $u\in S^{n-1}$. 

Second, we study the existence of heteroclinic cycles on the invariant sphere.
We impose the condition $d=0$ to guarantee the existence of invariant subspaces that support the heteroclinic connections.
When $n \geq 3$, we give conditions on the coefficients $b,c$ ($a$ may be assumed to be $-1$ by a time-scaling) for the existence of heteroclinic cycles connecting the axial equilibria $\pm e_i$, $i=1,\dots,n$.
These conditions correspond to steady-state bifurcation conditions, transcritical or fold, at the axial equilibria $\pm e_i$ or a middle point between axial equilibria, respectively.
This forms a heteroclinic network connecting the poles of the sphere.
We also see that this heteroclinic network is not stable if $n>3$. And we give conditions on the coefficients $b,c$ for the stability of the heteroclinic network when $n=3$.

Finally, we focus on the case $n=3$ and study perturbation of the previous dynamic on the sphere.
For simplicity, we assume that the dynamic around the heteroclinic network is equivalent to the linearisation around each equilibrium.
By perturbing the coefficient $d$, the invariant subspaces that supported the heteroclinic connections are destroyed, and the heteroclinic network is broken.
When the heteroclinic network is stable, we show that, depending on the sign of $d$, the system exhibits one or three attracting periodic orbits close to it.
In both cases, the set of periodic orbits shadows the heteroclinic network.
When the heteroclinic network is not stable, the perturbed system presents one or two repelling periodic orbits, which cover only half of the destroyed heteroclinic network.
We present some numerical simulations to illustrate the results.
The symmetries of the system play an important role in the analysis of the dynamics on the invariant sphere and its perturbations.
In the fundamental domain, the heteroclinic network corresponds to two homoclinic orbits around the same equilibrium.
So we also have two Poincaré maps transversal to each homoclinic orbit, one orientation-preserving and the other orientation-reversing.
This means that a neighbourhood of the heteroclinic network correspond to an annulus glued with a Möbius band in the fundamental domain.
One homoclinic orbit is inside the annulus, and the other is inside the Möbius band.

In the literature, there has been significant research on the bifurcations of homoclinic orbits and heteroclinic networks in different settings \cite{HS10}. 
It is well known that co-dimension one bifurcations of one homoclinic orbit in the plane lead to the emergence of a unique periodic orbit in the annulus neighbourhood of the homoclinic orbit.
The arc length of the emerging periodic orbits converges to the arc length of the homoclinic orbit.
Moreover, under standard conditions, the co-dimension one bifurcation of one homoclinic orbit in $\mathbb{R}^n$ also leads to a unique periodic orbit if the leading eigenvalues are real, \cite{S68}.
However, the arc length of the emerging periodic orbits may converge to something different than the arc length of the homoclinic orbit.
Since the periodic orbits are given by fixed and periodic points of the Poincaré map transversal to the homoclinic orbit, the arc length of the emerging periodic orbits is closely related to the periodicity of the corresponding point in the Poincaré map.
For example, if the homoclinic orbit is placed in a Möbius band, then the Poincaré map is an orientation-reversing map, and the unique emerging periodic orbit is given by a periodic point with periodicity two.
So the arc length of the emerging periodic orbits converges to twice the arc length of the homoclinic orbit. 
An extreme case occurs when the homoclinic orbit is placed in a Klein bottle, and the arc length of the emerging periodic orbits converges to infinity.
Note that the periodicity of the Poincaré map is closely related to the number of times that the emerging periodic orbits pass near the equilibrium before closing the orbit. 

In the case study analysed here, we have two homoclinic orbits around the same equilibrium, one in the annulus and the other in the Möbius band.
The Poincaré maps transversal to each homoclinic orbit must be carefully composed to describe the dynamics in the fundamental domain.
This means that we do not have a simple concept of periodicity of the Poincaré map.
We will instead count the number of times that the emerging periodic orbits pass near the equilibria before closing the orbit.  
When the homoclinic orbits are stable, the perturbation gives rise to two attracting periodic orbits, one around the annulus and one around the Möbius band, or one periodic orbit which goes around the annulus and the Möbius band.
Thus the arc length of the attracting periodic orbits converges to the arc length of the homoclinic network.
When the homoclinic orbits are not stable, the perturbed system exhibits one repelling periodic orbit that may go around the annulus or the Möbius band. 
So, the arc length of the emerging periodic orbits converges to half the arc length of the homoclinic network.
This analysis provides the geometric intuition behind the results on the number and arc length of periodic orbits on the sphere arising from the destruction of the heteroclinic network.

The text is organised as follows. In Section~\ref{sec:inv_sphere}, we present the invariant sphere theorem and apply it to ring-coupled systems. In Section~\ref{sec:hetnetwork}, we study the existence of heteroclinic networks on the invariant sphere. In Section~\ref{sec:perturb}, we study perturbations of the $3$-ring-coupled system.

\section{Invariant sphere theorem}\label{sec:inv_sphere}

In this section, we will recall the invariant sphere theorem and the definition of ring-coupled systems. Then we will provide conditions on the cubic ring-coupled systems to guarantee the existence of an invariant sphere.

Let $P^{k}$ be the set of homogeneous polynomials $Q:\mathbb{R}^n\rightarrow \mathbb{R}^n$  with degree $k$.

\begin{defi}
An homogeneous polynomial $Q\in P^{2p+1}$ is said to be contracting if 
$$\langle Q(u),u\rangle<0,$$
for every $u\in S^{n-1}=\{x\in \mathbb{R}^n: \|x\|=1\}$.
\end{defi}

The invariant sphere theorem states that if $Q\in P^{2p+1}$ is contracting, then there exists an invariant sphere for the system $\dot{x}=\lambda x + Q(x)$, for every $\lambda>0$.

\begin{theo}(\cite[Theorem~5.1.1]{F07})
Let $p\geq 1$ and suppose that $Q\in P^{2p+1}$ is contracting. Then, for every $\lambda>0$ there exists a unique $n$-dimensional sphere $S(\lambda)\subset \mathbb{R}^n\setminus \{0\}$ which is invariant by the flow of 
$$\dot{x}=\lambda x + Q(x).$$
Further
\begin{enumerate}
	\item $S(\lambda)$ is globally attracting in the sense that every trajectory $x(t)$ with non-zero initial condition is asymptotic to $S(\lambda)$ as $t\rightarrow \infty$.
\item $S(\lambda)$ is embedded as a topological submanifold of $\mathbb{R}^n$ and the bounded component of $\mathbb{R}^n\setminus S(\lambda)$ contains the origin.
\item The flow restricted to $S(\lambda)$ is topologically equivalent to the flow of the phase vector field $\mathcal{P}_Q$, where $\mathcal{P}_Q(u)= Q(u)-(Q(u),u)u$ and $u\in S^{n-1}$.
\end{enumerate}
\end{theo}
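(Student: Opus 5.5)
The natural approach is to work in polar coordinates $x = r u$ with $r=\|x\|>0$ and $u\in S^{n-1}$. By homogeneity of degree $k=2p+1$, the system $\dot x=\lambda x+Q(x)$ becomes the skew-product
\begin{equation*}
\dot r = \lambda r + r^{k}\langle Q(u),u\rangle,\qquad \dot u = r^{k-1}\bigl(Q(u)-\langle Q(u),u\rangle u\bigr)=r^{2p}\mathcal{P}_Q(u).
\end{equation*}
After the time reparametrisation $d\tau = r^{2p}\,dt$, which is valid since $r>0$ away from the origin, the angular equation becomes $u'=\mathcal{P}_Q(u)$. This equation is independent of $r$. The radial equation becomes $r' = \lambda r^{1-2p} + r\,\phi(u)$, where $\phi(u)=\langle Q(u),u\rangle<0$. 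Substituting $s=r^{-2p}$ linearises it:
\begin{equation*}
s' = -2p\lambda s^{2}\cdot s^{-1}\cdots
\end{equation*}
More precisely, $s'=-2p\,r^{-2p-1}r' = -2p\lambda s^{2} - 2p\,\phi(u)\,s$. This is still quadratic in $s$, so the reparametrisation should not be kept for the radial part. Instead I would use the original time and set $w=r^{-2p}$. This gives $\dot w = -2p\lambda w - 2p\,\phi(u(t))$, which is a linear nonautonomous scalar equation driven by the angular motion $u(t)$. Here $u(t)$ follows the phase field in the rescaled time, which depends on $r$. To avoid this coupling, I would instead write everything in the time $\tau$, with $w$ as the unknown, $w'=-2p\lambda-2p\phi(u(\tau))\,w$. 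This is again linear, now with coefficient $-2p\phi(u(\tau))\ge 2p\mu>0$, where $\mu=-\max_{S^{n-1}}\phi>0$ by compactness and the contracting hypothesis.

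With $w' = a(\tau)w - 2p\lambda$ and $a(\tau)\ge 2p\mu$, there is a unique solution that stays bounded for all $\tau\in\mathbb{R}$, namely
\begin{equation*}
w^*(\tau)=2p\lambda\int_\tau^\infty e^{-\int_\tau^\sigma a}\,d\sigma .
\end{equation*}
It satisfies $\lambda/M\le w^*\le \lambda/\mu$ with $M=-\min\phi$. Every other solution diverges from it exponentially as $\tau\to\infty$, which corresponds to $r\to 0$ or $r\to\infty$ in backward original time. Since $w^*$ depends only on the orbit of $u$, I would define $S(\lambda)=\{\,u\,w^*(u)^{-1/2p}\,\}$ with $w^*(u)=2p\lambda\int_0^\infty \exp\bigl(2p\int_0^\sigma\phi(\varphi_s(u))ds\bigr)d\sigma$, where $\varphi$ is the phase flow. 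The integral converges uniformly, so $u\mapsto w^*(u)$ is continuous and positive. The set $S(\lambda)$ is therefore a radial graph over $S^{n-1}$, hence a topologically embedded sphere whose bounded complementary component contains $0$. Invariance follows from the cocycle identity $w^*(\varphi_\tau u)$ solving the ODE.

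Uniqueness comes from the same dichotomy: any invariant compact set away from $0$ yields bounded $w$ along full orbits, which forces $w=w^*$. For global attraction I would work in the original time. The radial estimate $\dot r\le \lambda r-\mu r^{k}$ and its counterpart give that $r(t)$ enters a compact annulus in finite time. Then, in $\tau$-time, $|w-w^*|$ decays like $e^{-2p\mu\tau}$, and $\tau\to\infty$ as $t\to\infty$ because $r$ is bounded below. Finally, radial projection $S(\lambda)\to S^{n-1}$ maps orbits to orbits of $\mathcal{P}_Q$ preserving time direction, since only the positive reparametrisation $r^{2p}$ intervenes. This gives the topological equivalence in item~3.

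The main obstacle is the continuity of $w^*$ and the uniqueness argument. Both require a uniform exponential dichotomy of the linear equation along every phase orbit, which is exactly where the strict contracting inequality together with compactness of $S^{n-1}$ is essential.
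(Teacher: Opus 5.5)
The paper does not prove this theorem; it quotes it from Field [F07]. Your argument therefore has to stand on its own, and its central step is false.

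\textbf{The linear equation for $w$ is wrong.} In $\tau$-time your $w=r^{-2p}$ is the same variable as $s$. Since $dt/d\tau=r^{-2p}=w$,
\[
w'=w\,\dot w=-2p\lambda w^{2}-2p\,\phi(u)\,w .
\]
This is the Bernoulli equation you had just rejected, not $w'=-2p\lambda-2p\phi(u(\tau))w$. The variable that linearises in $\tau$-time is $v=r^{2p}=1/w$:
\[
v'=2p\,r^{2p-1}r'=2p\lambda+2p\,\phi(\varphi_\tau u_0)\,v,\qquad 2p\,\phi\le-2p\mu<0 .
\]
This reverses your dichotomy. Solutions approach each other as $\tau\to+\infty$ and separate as $\tau\to-\infty$. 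The unique solution bounded on $\mathbb{R}$ is determined by the \emph{backward} phase orbit:
\[
v^*(u)=2p\lambda\int_0^\infty\exp\Bigl(2p\int_{-\sigma}^{0}\phi(\varphi_s u)\,ds\Bigr)\,d\sigma\in\Bigl[\frac{\lambda}{M},\frac{\lambda}{\mu}\Bigr],\qquad S(\lambda)=\bigl\{v^*(u)^{1/2p}\,u:\ u\in S^{n-1}\bigr\}.
\]

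\textbf{Consequences.} The set $\{u\,w^*(u)^{-1/2p}\}$ you build from the forward orbit is not invariant in general. For $Q(x)=-\|x\|^{2}x$ (so $p=1$, $\phi\equiv-1$), your formula gives $w^*\equiv\lambda$ and a sphere of radius $\lambda^{-1/2}$, whereas the invariant sphere is $\|x\|=\lambda^{1/2}$. The proposal is also internally inconsistent. Under your equation, $w-w^*$ satisfies $\delta'=a(\tau)\delta$ with $a\ge2p\mu$, so it grows like $e^{2p\mu\tau}$, as your dichotomy paragraph says. Yet your attraction step claims it decays like $e^{-2p\mu\tau}$. From your equation the candidate sphere would be repelling, and item~1 could not follow.

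\textbf{How to repair it.} Use $v=r^{2p}$ in place of $w$; the rest of your outline then goes through and gives a self-contained proof.
\begin{itemize}
\item $v^*$ is continuous by dominated convergence, since the integrand is bounded by $e^{-2p\mu\sigma}$.
\item The substitution $\rho=\tau-\sigma$ shows that $\tau\mapsto v^*(\varphi_\tau u)$ solves the linear equation, which gives invariance.
\item The estimate $|v(\tau)-v^*(\varphi_\tau u_0)|\le e^{-2p\mu\tau}|v(0)-v^*(u_0)|$, together with your annulus estimate (which guarantees $\tau\to\infty$ as $t\to\infty$), gives global attraction.
\item Two distinct solutions separate at least like $e^{2p\mu|\tau|}$ as $\tau\to-\infty$. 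Hence every compact invariant subset of $\mathbb{R}^n\setminus\{0\}$ lies in $S(\lambda)$.
\item For uniqueness you still need one more line: an invariant topological $(n-1)$-sphere contained in $S(\lambda)$ must equal it, by invariance of domain.
\end{itemize}
Item~3 via radial projection is correct as written, since $\dot u=r^{2p}\mathcal{P}_Q(u)$ is a positive time reparametrisation of the phase flow.
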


Consider $N$ to be a ring network with $n$ cells and a single cycle connecting the cells, see Figure~\ref{fig:ringnetwork}.

\begin{figure}[ht]
\begin{center}
\begin{tikzpicture}
\matrix [column sep=7mm, row sep=5mm] {
\node (v1) [draw, shape=circle] {1} ; &  \node (v2) [draw, shape=circle] {2}; &\node (v3) [ shape=circle] {...} ; &  \node (v4) [draw, shape=circle] {n}; \\
};
\draw[->, thick] (v1) to (v2);
\draw[->, thick] (v2) to (v3);
\draw[->, thick] (v3) to (v4);
\draw[->, thick] (v4) to [bend right] (v1);
\end{tikzpicture}
\end{center}
\caption{Ring network with $n$ cells}
\label{fig:ringnetwork}
\end{figure}

Following \cite{GST05}, the ring-coupled systems associated with a ring network have the form
$$
\dot{x}=F_N(x)\Leftrightarrow \begin{cases}
\dot{x}_1=f(x_1,x_n)\\
\dot{x}_2=f(x_2,x_1)\\
\vdots\\
\dot{x}_n=f(x_n,x_{n-1})
\end{cases}$$
where $x=(x_1,x_2,\dots,x_{n-1},x_n)\in\mathbb{R}^n$ and $f:\mathbb{R}^2\rightarrow \mathbb{R}$ is a function called the cell's dynamics. 
The vector-valued function $F_N:\mathbb{R}^n\rightarrow \mathbb{R}^n$ that respects the ring structure is defined by the function $f$. 
Denote by $\mathfrak{C}_N$ be the set of vector-valued functions that respects the structure of a ring $N$.
Let $P_N^{k}=\mathfrak{C}_N\cap P^{k}$ be the set of homogeneous polynomials with degree $k$ that also respects the structure of a ring $N$.
For each $Q_N:\mathbb{R}^n\rightarrow \mathbb{R}^n\in P_N^{k}$ there exists a homogeneous polynomial $q:\mathbb{R}^2\rightarrow \mathbb{R}$ with degree $k$.

We will focus on the case $k=3$ to consider cubic polynomials.
The homogeneous polynomials $q:\mathbb{R}^2\rightarrow \mathbb{R}$ with degree $3$ are given by the following
$$q(y,z)=a y^3+b y^2z+c yz^2+d z^3,$$
for some constants $a,b,c,d\in\mathbb{R}$. Let $Q_N$ be the corresponding ring-coupled cubic polynomial given by 
$$Q_N(x) = (q(x_1,x_n), q(x_2,x_1), \dots, q(x_n,x_{n-1})).$$
In order to apply the invariant sphere theorem, the polynomial $Q_N(x)$ must be contracting, this means that 
$$\langle Q_N(x),x\rangle<0,$$ 
for every $x\in\mathbb{R}^n\setminus \{0\}$.
The following result provides sufficient conditions for $Q_N$ to be contracting.

\begin{prop}\label{prop:contrac}
Let $q(y,z)=a y^3+b y^2z+c yz^2+d z^3$ be a cubic homogeneous polynomial.

If there exists $k$ such that $a<k<0$ and 
$$(\Delta(k)>0 \wedge (P(k)> 0 \vee D(k)> 0)) \vee (\Delta(k)=D(k)=R(k)=0 \wedge P(k)>0),$$
where 
$$\begin{aligned}
\Delta(k) ={}&256(a-k)^{3}k^{3}-192(a-k)^{2}bdk^{2}-128(a-k)^{2}c^{2}k^{2}+144(a-k)^{2}cd^{2}k-27(a-k)^{2}d^{4}\\
&+144(a-k)b^{2}ck^{2}-6(a-k)b^{2}d^{2}k-80(a-k)bc^{2}dk+18(a-k)bcd^{3}+16(a-k)c^{4}k\\
&-4(a-k)c^{3}d^{2}-27b^{4}k^{2}+18b^{3}cdk-4b^{3}d^{3}-4b^{2}c^{3}k+b^{2}c^{2}d^{2}
\end{aligned}$$
$$P(k)=8(a-k)c-3b^{2}$$
$$D(k)=64(a-k)^{3}k-16(a-k)^{2}c^{2}+16(a-k)b^{2}c-16(a-k)^{2}bd-3b^{4}$$
$$R(k)=b^{3}+8d(a-k)^{2}-4(a-k)bc$$
then $Q_N\in P_N^{3}$ is contracting.
\end{prop}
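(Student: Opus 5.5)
We need $\langle Q_N(x),x\rangle = \sum_i x_i q(x_i,x_{i-1}) <0$ for $x\neq 0$ (indices cyclic). Write $x_i q(x_i,x_{i-1}) = a x_i^4 + b x_i^3x_{i-1} + c x_i^2x_{i-1}^2 + d x_i x_{i-1}^3$. The plan is to split the diagonal term $a x_i^4 = (a-k)x_i^4 + k x_i^4$, where $a<k<0$. Since the cyclic sum of $k x_i^4$ equals the cyclic sum of $k x_{i-1}^4$, we get
$$\langle Q_N(x),x\rangle=\sum_{i=1}^n g(x_i,x_{i-1}),\qquad g(y,z)=(a-k)y^4+by^3z+cy^2z^2+dyz^3+kz^4 .$$
It therefore suffices to show that the binary quartic $g$ is negative definite, i.e. $g(y,z)<0$ for $(y,z)\neq(0,0)$. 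Then every summand is $\le 0$, and the sum is strictly negative: if $x\neq 0$, some pair $(x_i,x_{i-1})\neq(0,0)$.

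Both the leading coefficient $a-k$ and the trailing coefficient $k$ are negative, so $g(y,0)<0$ and $g(0,z)<0$. Negative definiteness is then equivalent to the dehomogenised quartic $h(t)=g(t,1)=(a-k)t^4+bt^3+ct^2+dt+k$ having no real roots. Equivalently, we divide by $a-k$ and ask that the monic quartic $t^4+\frac{b}{a-k}t^3+\frac{c}{a-k}t^2+\frac{d}{a-k}t+\frac{k}{a-k}$ have no real roots. I would now invoke the classical root classification of a real quartic $At^4+Bt^3+Ct^2+Dt+E$ via its discriminant $\Delta$ and the auxiliary invariants $P=8AC-3B^2$, $R=B^3+8DA^2-4ABC$, $D=64A^3E-16A^2C^2+16AB^2C-16A^2BD-3B^4$ (Rees / Lazard). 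The case analysis is:
\begin{itemize}
\item if $\Delta>0$ and ($P>0$ or $D>0$), the quartic has two pairs of non-real complex conjugate roots;
\item if $\Delta=0$, $D=0$, $R=0$ and $P>0$, there are two equal pairs of non-real complex conjugate roots (a repeated complex pair $(t-z)^2(t-\bar z)^2$).
\end{itemize}
In either case $h$ has no real root.

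The computational step is to substitute $A=a-k$, $B=b$, $C=c$, $D=d$, $E=k$ into these formulas. This yields exactly $P(k)$, $D(k)$, $R(k)$ and $\Delta(k)$ as stated. For example, $P=8(a-k)c-3b^2$ and $R=b^3+8d(a-k)^2-4(a-k)bc$, and the discriminant expansion matches term by term, e.g. $256A^3E^3=256(a-k)^3k^3$. Under the hypothesis, $h$ has no real zeros, and since $h(0)=k<0$, $h<0$ everywhere. Hence $g<0$ off the origin by homogeneity ($g(y,z)=z^4h(y/z)$ for $z\neq0$, together with $g(y,0)=(a-k)y^4<0$).

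The main obstacle is being careful with the quartic classification. One must cite it in the form where the conditions are stated for a general (non-monic) quartic, so that no sign issue arises from $A=a-k<0$. Note that the invariants $P$ and $D$ as written depend on $A$ only through expressions whose sign conditions are the ones tabulated for the general case. If the reference instead uses the normalised depressed quartic, I would check that dividing by $A<0$ preserves the signs of $\Delta$ (degree $6$ in the coefficients, so invariant under $A\mapsto$ scaling by $A^{-6}>0$), of $P$ (scaled by $A^{-2}$), and of $D$ (scaled by $A^{-4}$), while $R$ only matters through $R=0$. Together these checks justify the direct substitution.
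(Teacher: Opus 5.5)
Your proposal is correct and follows the paper's proof essentially step for step. You use the same splitting $a x_i^4=(a-k)x_i^4+kx_i^4$ with cyclic reindexing to reduce to negative definiteness of a binary quartic, the same dehomogenisation using $a-k<0$ and $k<0$, and the same appeal to the Rees/Lazard root classification with $(A,B,C,D,E)=(a-k,b,c,d,k)$. Your remarks that the sum is strictly negative because some pair $(x_i,x_{i-1})$ is nonzero, and that the sign conditions on $\Delta$, $P$, $D$ are unaffected by the negative leading coefficient, make explicit two points the paper leaves tacit.
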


\begin{proof}
Writing the indices module $n$ ($x_0\equiv x_n$ and $x_{n+1}\equiv x_1$), we have that
$\sum_{i=1}^n x_{i-1}^4= \sum_{i=1}^n x_i^4$ and 
\begin{align*}
\langle Q_N(x),x\rangle=&\sum_{i=1}^n a x_i^4+b x_i^3x_{i-1}+c x_i^2x_{i-1}^2+d x_ix_{i-1}^3,\\
=&\sum_{i=1}^n (a-k) x_i^4+b x_i^3x_{i-1}+c x_i^2x_{i-1}^2+d x_ix_{i-1}^3+k x_{i-1}^4.
\end{align*}
where $k\in\mathbb{R}$.

Let $h_k(x,y)=(a-k) x^4+b x^3y+c x^2y^2+d x y^3+k y^4$ be the function that is summed before. 
If $h_k(x,y)$ is negative defined for some $k$, $h_k(x,y)< 0$ for every $(x,y)\in\mathbb{R}^2\setminus \{(0,0)\}$, then $Q_N$ is contracting.
It is possible to give sufficient and necessary condition for $h_k(x,y)$ to be negative defined
Since $h_k(\lambda (x,y))=\lambda^4 h_k(x,y)$, we have that 
$$h_k(x,y)< 0 \forall_{(x,y)\in\mathbb{R}^2\setminus \{(0,0)\}}$$
$$\Leftrightarrow h_k(\dfrac{x}{y},1)<0 \wedge h_k(1,0)<0 \forall_{x\in\mathbb{R}}\forall_{y\in \mathbb{R}\setminus\{0\}}$$
$$\Leftrightarrow h_k(t,1)<0 \wedge a<k \forall_{t\in\mathbb{R}}.$$
Let $g(t)=h_k(t,1)=(a-k) t^4+b t^3+c t^2+d t +k$. 
Then $g(t)$ is always negative, if and only if
$$g(0)<0 \textrm{ and $g$ has non real roots.}$$
Following, \cite{Y99,L88}, we obtain the following algebraic conditions
$$k<0 \wedge (\Delta(k)>0 \wedge (P(k)> 0 \vee D(k)> 0)) \vee (\Delta(k)=D(k)=R(k)=0 \wedge P(k)>0).$$
\end{proof}

Next, we exemplify the previous results and highlight the importance of the choice of $k$ in the previous proposition.

\begin{exa}
Let $a=-3$, $b=-2$, $c=-1$ and $d$ small.
For $k=-1$, we have that $P(-1)=4$ and
$$\Delta(-1)=2272 + 1712 d + 524 d^2 - 40 d^3 - 108 d^4.$$
So for small values of $d$, we have that $\Delta(-1)>0$ and $P(-1)>0$. 
Hence, by Proposition~\ref{prop:contrac}, $Q_N$ is contracting.

Note that $\Delta(0)=-8 d^2 - 76 d^3 - 243 d^4$ is negative for small values of $d$. So, $k$ must be carefully selected to guarantee that $\Delta(k)\geq 0$.
\end{exa}

We finish the first part of this work by putting together the previous results and the invariant sphere theorem to obtain the following corollary.

\begin{coro}
Let $N$ be a ring network with $n$ cells, and let $q(y,z)=a y^3+b y^2z+c yz^2+d z^3$ be a cubic homogeneous polynomial.

If there exists $k$ such that $a<k<0$ and 
$$(\Delta(k)>0 \wedge (P(k)> 0 \vee D(k)> 0)) \vee (\Delta(k)=D(k)=R(k)=0 \wedge P(k)>0)$$
then for every $\lambda>0$ there exists a unique $n$-dimensional sphere $S(\lambda)\subset \mathbb{R}^n\setminus \{0\}$ which is invariant by the flow of
$$\dot{x}=\lambda x + Q_N(x).$$
Moreover, the $S(\lambda)$ is globally attractive and the flow restricted to $S(\lambda)$ is topologically equivalent to the flow on the unitary sphere of the system
$$\dot{u}=\mathcal{P}_{Q_N}(u),$$
where $\mathcal{P}_{Q_N}(u)= Q_N(u)-\langle Q_N(u),u\rangle u$ and $u\in S^{n-1}$.
\end{coro}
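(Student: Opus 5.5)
The corollary follows by chaining Proposition~\ref{prop:contrac} with the invariant sphere theorem (\cite[Theorem~5.1.1]{F07}). The plan is to verify the hypotheses of the theorem for $Q=Q_N$ and then read off its conclusions.

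First, observe that $Q_N$ is a homogeneous cubic polynomial map $\mathbb{R}^n\to\mathbb{R}^n$. Each component $q(x_i,x_{i-1})$ is a homogeneous cubic in the variables $x_1,\dots,x_n$, so $Q_N\in P^3=P^{2p+1}$ with $p=1$, and in fact $Q_N\in P_N^3$.

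Second, under the stated hypothesis (existence of $k$ with $a<k<0$ and the algebraic conditions on $\Delta(k)$, $P(k)$, $D(k)$, $R(k)$), Proposition~\ref{prop:contrac} gives directly that $Q_N$ is contracting. That is, $\langle Q_N(u),u\rangle<0$ for all $u\in S^{n-1}$; by homogeneity this is equivalent to the condition for all $x\neq 0$ used in the proof of that proposition.

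Third, apply the invariant sphere theorem with $p=1$ and $Q=Q_N$. For each $\lambda>0$ it yields a unique invariant sphere $S(\lambda)\subset\mathbb{R}^n\setminus\{0\}$ for $\dot x=\lambda x+Q_N(x)$. Item~1 of the theorem gives global attraction of all nonzero trajectories. Item~3 gives topological equivalence of the restricted flow to that of the phase vector field $\mathcal{P}_{Q_N}(u)=Q_N(u)-\langle Q_N(u),u\rangle u$ on $S^{n-1}$.

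There is no real obstacle. The only point needing care is bookkeeping. The hypothesis of the corollary must be exactly that of Proposition~\ref{prop:contrac}, including the requirement $k<0$ that guarantees $g(0)<0$. Also, ``$n$-dimensional sphere'' should be read as in the cited theorem, namely as a topological copy of $S^{n-1}$ in $\mathbb{R}^n$.
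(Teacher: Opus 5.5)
Your proposal is correct and follows the paper's route exactly: Proposition~\ref{prop:contrac} makes $Q_N\in P_N^3\subset P^3$ contracting, and the invariant sphere theorem (\cite[Theorem~5.1.1]{F07}) with $p=1$ then gives the unique, globally attracting invariant sphere and the topological equivalence with the flow of $\mathcal{P}_{Q_N}$ on $S^{n-1}$. You are also right to read ``$n$-dimensional sphere'' as a topological copy of $S^{n-1}$; the stated dimension is a slip inherited from the quoted theorem.
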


Figure \ref{fig:ring 3 celulas sink} shows the dynamic on the simplex when $a=-3$, $b=-2$, $c=-1$, $d=0$ and $\lambda=0.1$.

\begin{figure}[ht]
	\centering
		\includegraphics[width=0.5\textwidth]{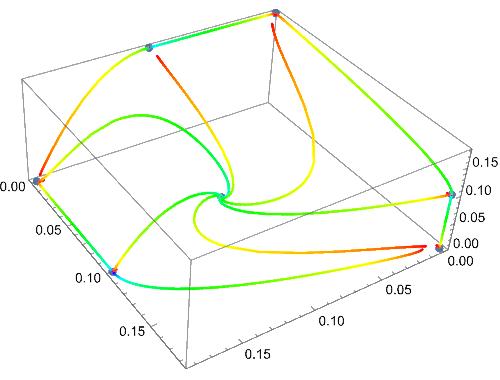}
	\caption{Dynamic on the simplex when $a=-3$, $b=-2$, $c=-1$, $d=0$ and $\lambda=0.1$}
	\label{fig:ring 3 celulas sink}
\end{figure}

Note that the condition $(\Delta(k)>0 \wedge (P(k)> 0 \vee D(k)> 0))$ is open in the coefficients $a,b,c,d$, this means that small perturbations of the coefficients $a,b,c,d$ do not break the previous condition and retains the existence of a invariant sphere.

\section{Heteroclinic networks}\label{sec:hetnetwork}

A heteroclinic cycle is a finite collection of equilibria $\{p_1,\dots,p_m\}$ and heteroclinic connections $\{\gamma_1,\dots,\gamma_{m}\}$, where $\gamma_i$ is a trajectory connecting $p_i$ to $p_{i+1}$, for $i=1,\dots,m-1$, and $\gamma_m$ connects $p_m$ to $p_1$. 
A heteroclinic network is a union of heteroclinic cycles that share some equilibria.

From now on, we assume that $Q_N$ is contracting and $\lambda>0$. 
In this case, we know that $a=\langle Q_N(1,0,\dots,0),(1,0,\dots,0)\rangle$ is negative. So we can make a time-scaling $t=-a\tilde{t}$ and assume without loss of generality that $a=-1$.
Along this section, we will assume that $d=0$ to guarantee the existence of invariant subspaces that support the heteroclinic connections. So, in this section, we are considering cubic homogeneous polynomial of the form $$q(y,z)=-y^3+b y^2 z + c y z^2,$$
where $b,c\in\mathbb{R}$.
By the invariant sphere theorem, the flow of the system (\ref{eq:edoring}) converges to an invariant sphere, and the dynamic on this sphere is equivalent to the following dynamical system on the unitary sphere 
\begin{equation}\label{eq:edosphere} 
\dot{u}=\mathcal{P}_N(u),
\end{equation}
where $\mathcal{P}_N(u)= Q_N(u)-\langle Q_N(u),u\rangle u$ and $u\in S^{n-1}$.

Since $d=0$, we have that $$A_i=\{u\in S^{n-1}: u_i=0\}$$ is invariant by the vector field $\mathcal{P}_N$ for every $i=1, \dots, n$.
The intersection $\bigcap_{i\neq k} A_i = \{ \pm e_k\}$, for any $k=1, \dots, n$, is also invariant by $\mathcal{P}_N$. Therefore the $2n$ poles, $\pm e_1, \pm e_2,\dots, \pm e_n$, are equilibria of the system (\ref{eq:edosphere}). 
We will use the following invariant circles in the sphere to connect the poles by heteroclinic connections:
$$B_i:=\bigcap_{j\neq i,i+1} A_j=\{u\in S^{n-1}: u_j=0, j\neq i,i+1\},$$
where $x_{n+1}\equiv x_1$. 
The following result states when there is a heteroclinic network connecting the poles along the previous circles.

\begin{prop}\label{prop:hetcycle}
Let $N$ be the $n$-ring, $\lambda>0$, and $q(y,z)=-y^3+b y^2 z + c y z^2$ such that $Q_N$ is contracting.
\begin{itemize}

\item[(i)] If $b^2+4(c+1)<0$ or $(b,c)=(0,-1)$, then the system (\ref{eq:edoring}) has a heteroclinic network connecting the poles $\pm e_1, \pm e_2,\dots, \pm e_n$ with heteroclinic connections from $e_{i+1}$ to $\pm e_i$ and from $-e_{i+1}$ to $\pm e_i$ for every $i=1, \dots, n$. 

\item[(ii)] If $b^2+4(c+1)\geq 0$ and $c \leq -1$ and $(b,c)\neq (0,-1)$, then the system (\ref{eq:edoring}) has one or two heteroclinic cycles depending on the values of $b$ and $n$. Specifically, 
\begin{itemize}
\item[(ii.1)] If $b<0$, there are two heteroclinic cycles $\{e_n,\dots, e_2,e_1\}$ and $\{ -e_n,\dots, -e_2, -e_1\}$.

\item[(ii.2)] If $b>0$ and $n$ is even, there are two heteroclinic cycles $\{e_n,-e_{n-1},\dots, e_2,-e_1\}$ and $\{-e_n,e_{n-1},\dots, -e_2,e_1\}$.

\item[(ii.3)] If $b>0$ and $n$ is odd, there is one heteroclinic cycle \\$\{e_n,-e_{n-1},\dots, -e_2,e_1, -e_n, e_{n-1},\dots, e_2,-e_1\}$.  
\end{itemize}
\end{itemize}
\end{prop}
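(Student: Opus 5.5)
The plan is to work on the phase vector field (\ref{eq:edosphere}) on $S^{n-1}$ and then transfer the conclusions to (\ref{eq:edoring}) through the Corollary. Since $Q_N$ is contracting, the flow on $S(\lambda)$ is topologically equivalent to that of $\mathcal{P}_N$. This equivalence maps equilibria to equilibria and heteroclinic orbits to heteroclinic orbits. It therefore suffices to exhibit the connections on the invariant circles $B_i$.

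On $B_i$ only $u_i,u_{i+1}$ are nonzero, so we write $u_i=\cos\theta$ and $u_{i+1}=\sin\theta$. Because $u_{i-1}=0$, the $i$-th component of $Q_N$ is $-\cos^3\theta$, and the $(i+1)$-th is $q(\sin\theta,\cos\theta)$. The radial term $\langle Q_N(u),u\rangle u$ is orthogonal to the tangent direction $(-\sin\theta,\cos\theta)$, so
$$\dot\theta=\sin\theta\cos\theta\,\bigl[(1+c)\cos^2\theta+b\sin\theta\cos\theta-\sin^2\theta\bigr].$$
The poles $\pm e_i,\pm e_{i+1}$ correspond to $\theta\in\{0,\pi/2,\pi,3\pi/2\}$. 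Any further equilibria on $B_i$ are the roots of the bracket. Writing $t=\tan\theta$, these are the roots of $g(t)=-t^2+bt+(1+c)$, whose discriminant is $b^2+4(1+c)$.

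For (i), suppose first $b^2+4(1+c)<0$. Then the quadratic form in the bracket is negative definite, so the bracket is $<0$ everywhere. Hence $\operatorname{sign}\dot\theta=-\operatorname{sign}(\sin\theta\cos\theta)$ in each open quadrant:
\begin{itemize}
\item in the first and third quadrants $\theta$ moves toward $0$ or $\pi$;
\item in the second and fourth quadrants $\theta$ also moves away from $\pi/2,3\pi/2$ toward $0$ or $\pi$.
\end{itemize}
This gives the four connections $\pm e_{i+1}\to\pm e_i$. When $(b,c)=(0,-1)$ the bracket equals $-\sin^2\theta$, which vanishes only at the poles themselves. The same sign analysis then applies in each open quadrant. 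Taking the union over $i=1,\dots,n$, with $B_n$ joining $\pm e_1$ to $\pm e_n$, yields the network.

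For (ii), we have $b^2+4(1+c)\ge0$ and $c\le-1$, with $(b,c)\neq(0,-1)$. The roots $t_1,t_2$ of $g$ are real with $t_1t_2=-(1+c)\ge0$ and $t_1+t_2=b$. So the nonzero roots have the sign of $b$, and $b\neq0$ here: if $b=0$ the discriminant condition forces $c=-1$, which is excluded. Consequently, in the two open quadrants where $\tan\theta$ has the sign of $b$, an extra equilibrium appears. In the two quadrants where $\tan\theta$ has the opposite sign, $g(t)<0$ for all $t$: for $t$ of sign opposite to $b$ we have $-t^2<0$, $bt<0$ and $1+c\le0$. There the previous sign argument gives full connections.
\begin{itemize}
\item (ii.1) If $b<0$, the quadrants with $\tan\theta>0$ survive, giving $e_{i+1}\to e_i$ and $-e_{i+1}\to-e_i$.
\item (ii.2) and (ii.3) If $b>0$, the surviving connections are $e_{i+1}\to -e_i$ and $-e_{i+1}\to e_i$.
\end{itemize}
Chaining these around the ring, following $B_{n-1},\dots,B_1,B_n$, gives the stated cycles. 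When $b>0$ the sign alternates at each step, so after $n$ steps we return to $\pm e_n$ with sign $(-1)^n$. For $n$ even this closes into two cycles; for $n$ odd it closes only after $2n$ steps, giving a single cycle through all $2n$ poles.

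The routine part is the angular computation. The step I expect to need care is the boundary cases of (ii), namely $c=-1$ (a root at $t=0$) and a double root. In these cases I must check that the extra equilibria lie only in the blocked quadrants, or on the poles themselves, and never inside the surviving arcs. This is exactly the sign argument for $g$ given above.
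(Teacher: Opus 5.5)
Your proof is correct, and it follows the same skeleton as the paper's proof:
\begin{itemize}
\item restrict to the invariant circles $B_i$;
\item find the non-polar equilibria from the slope equation $m^2-bm-(c+1)=0$ (your $g(\tan\theta)=0$);
\item use $t_1t_2=-(1+c)\ge 0$ and $t_1+t_2=b$ to place them in the quadrants where $\tan\theta$ has the sign of $b$;
\item chain the surviving quarter-arcs around the ring.
\end{itemize}

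The genuine difference is how you fix the direction of the flow on the equilibrium-free arcs. The paper uses the linearisation lemma (Lemma~\ref{lem:spherelinear}) to get the eigenvalue $c+1$ at $\pm e_i$ along $B_i$ and the eigenvalue $1$ at $\pm e_{i+1}$. It then appeals to one-dimensionality of the circle and to Figure~\ref{fig:dynonbi}. You instead derive the closed form $\dot\theta=\sin\theta\cos\theta\,\bigl[(1+c)\cos^2\theta+b\sin\theta\cos\theta-\sin^2\theta\bigr]$ and read off its sign on each open quadrant.

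Your route is more elementary and self-contained. It treats the non-hyperbolic boundary cases uniformly: $c=-1$, where the eigenvalue $c+1$ at $\pm e_i$ vanishes, and the double root. There the paper has to rely on $\pm e_{i+1}$ being a source together with the figures. The paper's linearisation, in exchange, also gives the transverse eigenvalues $1,\dots,1$. It reuses these afterwards for the remark that non-consecutive poles are not connected and for the stability discussion.

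One small point to make explicit: your formula $(Q_N)_i=-\cos^3\theta$ on $B_i$ uses $u_{i-1}=0$, which requires $n\ge 3$. For $n=2$ the circle $B_1$ is all of $S^1$, and one gets $\dot\theta=(1+c)\sin\theta\cos\theta\cos 2\theta$, so extra equilibria appear at $\theta=\pi/4$ and the statement fails. The paper assumes $n\ge 3$ only tacitly as well, so you should state it.
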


\begin{proof}

The previous result follows from a detailed analysis of the dynamics on the invariant circles $B_i$. The dynamic on $B_i$ is illustrated in Figure~\ref{fig:dynonbi} for $b<0$.
When $b>0$, the dynamic on $B_i$ is similar but Figures~\ref{fig:dynonbi2},\ref{fig:dynonbi3} and \ref{fig:dynonbi4} are reflected with respect to the $e_i$ axis. This means that equilibrium point between the poles $e_i$ and $-e_{i+1}$ are turned into equilibrium point between the poles $e_i$ and $e_{i+1}$, and the equilibrium point between the poles $-e_i$ and $e_{i+1}$ are turned into equilibrium point between the poles $-e_i$ and $-e_{i+1}$. The previous proposition easily follows from the dynamics described in Figure~\ref{fig:dynonbi}.

\begin{figure}[ht]
\centering
\begin{subfigure}{0.15\textwidth}
\begin{center}
\begin{tikzpicture}
\node (ei) at (0,1)   [draw, circle,color=black,fill=black ,inner sep=1pt, label = above:{\small$e_i$}] {}; 
\node (ei1) at (1,0)   [draw, circle,color=black,fill=black ,inner sep=1pt, label = left:{\small$e_{i+1}$}]{};
\node (mei) at (0,-1)   [draw, circle,color=black,fill=black ,inner sep=1pt, label = below:{\small-$e_i$}]{}; 
\node (mei1) at (-1,0)   [draw, circle,color=black,fill=black ,inner sep=1pt, label = right:{\!\!\small-$e_{i+1}$}]{};

\node (ai) at (0.71,0.71)   [draw, circle,color=black,fill=black ,inner sep=1pt, label = left:{ }]{};
\node (ai1) at (0.71,-0.71)   [draw, circle,color=black,fill=black ,inner sep=1pt, label = left:{ }]{}; 
\node (mai) at (-0.71,-0.71)   [draw, circle,color=black,fill=black ,inner sep=1pt, label = left:{ }]{};
\node (mai1) at (-0.71,0.71)   [draw, circle,color=black,fill=black ,inner sep=1pt, label = left:{ }]{};

\draw[color=black,line width=0.2mm] (0,0) circle (1);

\draw[->] (1,0) arc (0:25:1);
\draw[->] (1,0) arc (0:-25:1);

\draw[->] (-1,0) arc (180:205:1);
\draw[->] (-1,0) arc (180:155:1);

\draw[->] (0,1) arc (90:65:1);
\draw[->] (0,1) arc (90:115:1);

\draw[->] (0,-1) arc (270:295:1);
\draw[->] (0,-1) arc (270:245:1);
\end{tikzpicture}
\end{center}
\caption{ }\label{fig:dynonbi1}
\end{subfigure}
\begin{subfigure}{0.15\textwidth}
\begin{center}
\hspace{5mm}
\begin{tikzpicture}
\node (ei) at (0,1)   [draw, circle,color=black,fill=black ,inner sep=1pt, label = above:{\small$e_i$}] {}; 
\node (ei1) at (1,0)   [draw, circle,color=black,fill=black ,inner sep=1pt, label = left:{\small$e_{i+1}$}]{};
\node (mei) at (0,-1)   [draw, circle,color=black,fill=black ,inner sep=1pt, label = below:{\small-$e_i$}]{}; 
\node (mei1) at (-1,0)   [draw, circle,color=black,fill=black ,inner sep=1pt, label = right:{\!\!\small-$e_{i+1}$}]{};

\node (ai1) at (0.71,-0.71)   [draw, circle,color=black,fill=black ,inner sep=1pt, label = left:{ }]{}; 
\node (mai1) at (-0.71,0.71)   [draw, circle,color=black,fill=black ,inner sep=1pt, label = left:{ }]{};

\draw[color=black,line width=0.2mm] (0,0) circle (1);

\draw[->] (ei1) arc (0:45:1);
\draw[->] (ei1) arc (0:-25:1);

\draw[->] (mei1) arc (180:225:1);
\draw[->] (mei1) arc (180:155:1);

\draw[->] (ei) arc (90:115:1);

\draw[->] (mei) arc (270:295:1);
\end{tikzpicture}
\end{center}
\caption{ }\label{fig:dynonbi2}
\end{subfigure}
\begin{subfigure}{0.15\textwidth}
\begin{center}
\begin{tikzpicture}
\node (ei) at (0,1)   [draw, circle,color=black,fill=black ,inner sep=1pt, label = above:{\small$e_i$}] {}; 
\node (ei1) at (1,0)   [draw, circle,color=black,fill=black ,inner sep=1pt, label = left:{\small$e_{i+1}$}]{};
\node (mei) at (0,-1)   [draw, circle,color=black,fill=black ,inner sep=1pt, label = below:{\small-$e_i$}]{}; 
\node (mei1) at (-1,0)   [draw, circle,color=black,fill=black ,inner sep=1pt, label = right:{\!\!\small-$e_{i+1}$}]{};

\node (bi) at (0.87,-0.5)   [draw, circle,color=black,fill=black ,inner sep=1pt, label = left:{ }]{};
\node (bi1) at (0.5,-0.87)   [draw, circle,color=black,fill=black ,inner sep=1pt, label = left:{ }]{}; 
\node (mai) at (-0.87,0.5)   [draw, circle,color=black,fill=black ,inner sep=1pt, label = left:{ }]{};
\node (mai1) at (-0.5,0.87)   [draw, circle,color=black,fill=black ,inner sep=1pt, label = left:{ }]{};

\draw[color=black,line width=0.2mm] (0,0) circle (1);

\draw[->] (ei1) arc (0:45:1);
\draw[->] (ei1) arc (0:-20:1);

\draw[->] (mei1) arc (180:165:1);
\draw[->] (mei1) arc (180:225:1);

\draw[->] (mai1) arc (120:100:1);
\draw[->] (mai1) arc (120:140:1);

\draw[->] (bi1) arc (300:320:1);
\draw[->] (bi1) arc (300:280:1);
\end{tikzpicture}
\end{center}
\caption{ }\label{fig:dynonbi3}
\end{subfigure}
\begin{subfigure}{0.15\textwidth}
\begin{center}
\begin{tikzpicture}
\node (ei) at (0,1)   [draw, circle,color=black,fill=black ,inner sep=1pt, label = above:{\small$e_i$}] {}; 
\node (ei1) at (1,0)   [draw, circle,color=black,fill=black ,inner sep=1pt, label = left:{\small$e_{i+1}$}]{};
\node (mei) at (0,-1)   [draw, circle,color=black,fill=black ,inner sep=1pt, label = below:{\small-$e_i$}]{}; 
\node (mei1) at (-1,0)   [draw, circle,color=black,fill=black ,inner sep=1pt, label = right:{\!\!\small-$e_{i+1}$}]{};

\node (ai1) at (0.71,-0.71)   [draw, circle,color=black,fill=black ,inner sep=1pt, label = left:{ }]{}; 
\node (mai1) at (-0.71,0.71)   [draw, circle,color=black,fill=black ,inner sep=1pt, label = left:{ }]{};

\draw[color=black,line width=0.2mm] (0,0) circle (1);

\draw[->] (ei1) arc (0:45:1);
\draw[->] (ei1) arc (0:-25:1);

\draw[->] (mei1) arc (180:155:1);

\draw[->] (mai1) arc (135:105:1);

\draw[->] (ai1) arc (315:290:1);
\draw[->] (mei1) arc (180:220:1);
\end{tikzpicture}\end{center}
\caption{ }\label{fig:dynonbi4}
\end{subfigure}
\begin{subfigure}{0.15\textwidth}
\begin{center}
\begin{tikzpicture}
\node (ei) at (0,1)   [draw, circle,color=black,fill=black ,inner sep=1pt, label = above:{\small$e_i$}] {}; 
\node (ei1) at (1,0)   [draw, circle,color=black,fill=black ,inner sep=1pt, label = left:{\small$e_{i+1}$}]{};
\node (mei) at (0,-1)   [draw, circle,color=black,fill=black ,inner sep=1pt, label = below:{\small-$e_i$}]{}; 
\node (mei1) at (-1,0)   [draw, circle,color=black,fill=black ,inner sep=1pt, label = right:{\!\!\small-$e_{i+1}$}]{};

\draw[color=black,line width=0.2mm] (0,0) circle (1);

\draw[->] (ei1) arc (0:45:1);
\draw[->] (ei1) arc (0:-45:1);

\draw[->] (mei1) arc (180:225:1);
\draw[->] (mei1) arc (180:135:1);

\end{tikzpicture} \end{center}
\caption{ }\label{fig:dynonbi5}
\end{subfigure}
\caption{Dynamics on the circle $B_i$ when $b<0$ and (\subref{fig:dynonbi1}) $c>-1$; (\subref{fig:dynonbi2}) $c=-1$; (\subref{fig:dynonbi3}) $b^2+4(c+1)>0$ and $c<-1$; (\subref{fig:dynonbi4}) $b^2+4(c+1)=0$; (\subref{fig:dynonbi5}) $b^2 + 4(c + 1) < 0$.}
\label{fig:dynonbi}
\end{figure}

In the rest of this proof, we will show that the dynamics presented in Figure~\ref{fig:dynonbi} is accurate. This analysis is  done for the circle $B_1$ and then translated to the other circles $B_i$ by the $\mathbb{Z}_n$ symmetry. 
We already know that the poles $\pm e_1$ and $\pm e_2$ are equilibria of the system (\ref{eq:edoring}).
We look for other equilibria of the system (\ref{eq:edoring}) in the invariant circle $B_1$. For $u=(u_1,u_2,0,\dots,0)\in B_1\subset S^{n-1}$ with $u_1u_2\neq 0$, we have that $|u_1|,|u_2|<1$, $u_1^2+u_2^2=1$ and
$$\mathcal{P}_N(u)=0
\Leftrightarrow 
q(u_2,u_1) = - u_1^2u_2 
\Leftrightarrow 
- u_2^2+b u_2u_1+(c+1) u_1^2= 0.
$$

So $u^*=(u_1^*,u_2^*,0,\dots,0)\in B_1\subset S^{n-1}$ with $u_1u_2\neq 0$ is a equilibrium point if and only if  the quadratic form $-u_2^2+b u_2u_1+(c+1) u_1^2$ vanishes along the line $u_2=m u_1$, where $m=\dfrac{u_2^*}{u_1^*}\neq 0$,
 $$- m^2 u_1^2+b m u_1^2+(c+1) u_1^2= 0\Leftrightarrow  m^2 - b m-(c+1) = 0\Leftrightarrow m=\dfrac{b\pm\sqrt{b^2+4(c+1)}}{2}.$$

Thus there are other equilibrium apart from the poles if and only if $b^2+4(c+1)\geq 0$.
If $c>-1$, then $b^2+4(c+1)>|b|\geq 0$ and two slopes $m$ have different signs, so there are four other equilibria in the circle $B_1$ as depicted in Figure~\ref{fig:dynonbi1}.
If $c=-1$ and $b\neq 0$, then $b^2+4(c+1)=b^2\geq 0$ and there is only one non vanishing slope $m=2b$. So there are two other equilibria in the circle $B_1$ as depicted in Figure~\ref{fig:dynonbi2} when $b<0$. When $b>0$, the equilibria appear by reflection with respect to the $e_1$ axis.
If $b^2+4(c+1)>0$ and $c<-1$, then the two slopes $m$ have the same sign. So there are four other equilibria in the circle $B_1$ as depicted in Figure~\ref{fig:dynonbi3} when $b<0$. When $b>0$, the equilibria appear by reflection with respect to the $e_1$ axis.
If $b^2+4(c+1)=0$, then there is one non-vanishing slope $m=\dfrac{b}{2}$ with the same sign that $b$. So there are two equilibria as depicted in Figure~\ref{fig:dynonbi4} when $b<0$. When $b>0$, the equilibria appear by reflection with respect to the $e_1$ axis.
If $b^2+4(c+1)<0$ or $(b,c)=(0,-1)$, then there is no other equilibrium in the circle $B_1$ apart from the poles as depicted in Figure~\ref{fig:dynonbi5}.

Now, we will analyse the stability of the poles in the system (\ref{eq:edosphere}).
Using the $\mathbb{Z}_2\times\mathbb{Z}_n$ symmetry, it is enough to study the stability of the pole $e_1$.
We have the following lemma that relates the linearization of $Q_N$ and $\mathcal{P}_N$.

\begin{lem}\label{lem:spherelinear}(\cite[Lemma 4.6.2]{F07})
The linearization of $\mathcal{P}_N$ at an equilibrium point $u\in S^{n-1}$ is given by:
$$\mathcal{T}_u\mathcal{P}_N=D'Q_N(u)-\langle Q_N(u),u\rangle Id_u,$$
where $Id_u$ is the identity on the tangent space $T_u S^{n-1}=\{v\in\mathbb{R}^n: \langle u,v\rangle=0\}$ and $D'Q_N(u)$ is $DQ_N(u)$ restricted to $T_u S^{n-1}$.

Let $\mu_1,\dots,\mu_k$ be the eigenvalues of $D'Q_N(u)$, the eigenvalues of $\mathcal{T}_u\mathcal{P}_N$ are given by 
$$\mu_1-\langle Q_N(u),u\rangle,\dots, \mu_k-\langle Q_N(u),u\rangle.$$
\end{lem}

The derivatives of $q(y,z)$ with respect to $y$ and $z$ are, respectively:
$$q_0(y,z)=3a y^2+2b yz+c z^2=-3 y^2+2b yz+c z^2,$$
$$q_1(y,z)=b y^2+2c yz+3d z^2=b y^2+2c yz.$$

Consider the equilibrium $e_1=(1,0,\dots,0)$. Note that $\langle Q_N(e_1),e_1\rangle=a=-1$ and
\begin{equation}\label{eq:DQNu1}
DQ_N(e_1)=
\begin{bmatrix}
q_0(1,0)&0&\dots&q_1(1,0)\\
q_1(0,1)&q_0(0,1)&\dots&0\\
\vdots & \vdots&\ddots&\vdots\\
0&0&\dots&0\\
\end{bmatrix}=\begin{bmatrix}
-3&0&\dots&b\\
0&c&\dots&0\\
\vdots & \vdots&\ddots&\vdots\\
0&0&\dots&0\\
\end{bmatrix}.
\end{equation}

The matrix $D'Q_N(u)$ is the submatrix of $DQ_N(u)$ by removing the first line and the first column.
The eigenvalues of $D'Q_N(u)$ are $c,0,\dots,0$.
Following Lemma~\ref{lem:spherelinear}, the eigenvalues of $\mathcal{T}_{e_1}\mathcal{P}_N$ are
$$c+1,1,\dots,1.$$
Note that the eigenvector associated with eigenvalue $c+1$ belongs to the space $\mathcal{T}_{e_1} S^{n-1}\cap B_1$.
Thus, the coefficient $c$ determines the stability of the poles $\pm e_1$ in the invariant space $B_1$.
While the equilibrium $\pm e_2$ is always a source in $B_1$ since the other eigenvalues of $\mathcal{T}_{e_1}\mathcal{P}_N$ are positive.
Since the circle is one-dimensional, the direction of the dynamics on the invariant circle $B_1$ is the one displayed in Figure~\ref{fig:dynonbi}.
In fact, the direction between the two middle equilibria in Figure~\ref{fig:dynonbi3} was not discussed before. 
However, this is not relevant for this proof.
\end{proof}

Note that there is no heteroclinic connection between two non-consecutive poles, since the stable manifold of each pole is at most one-dimensional and it is included in the invariant spaces $B_i$.

\begin{exa}
We illustrate the different behaviours in Proposition~\ref{prop:hetcycle} for the case of three coupled cells, $n=3$.

Taking $(b,c)=(0.25,-3)$, we have that $b^2+4(c+1)<0$. So the heteroclinic network connects all the poles as depicted in Figure~\ref{fig:hetcyclesexa1}. 
For $(b,c)=(-3,-3)$, we have that $b^2+4(c+1)>0$, $c<-1$ and $b<0$. So there are two heteroclinic cycles $\{e_3,e_2,e_1\}$ and $\{-e_3,-e_2,-e_1\}$ as shown in Figure~\ref{fig:hetcyclesexa2}.
When $(b,c)=(3,-3)$, we have that $b^2+4(c+1)>0$,
$c<-1$, $b>0$ and $n$ is odd. So there is one heteroclinic cycle $\{e_3,-e_2,e_1, -e_3, e_2,-e_1\}$ connecting the poles as depicted in Figure~\ref{fig:hetcyclesexa3}
Taking $k=-0.15$ in Proposition~\ref{prop:contrac}, we know that $Q_N$ is contracting in those cases.

\end{exa}
\begin{figure}[ht]
\centering
\begin{subfigure}{0.3\textwidth}
\includegraphics[width=\textwidth]{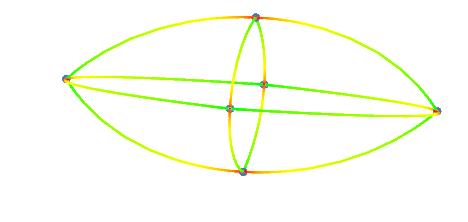}
\caption{$(0.25,3)$ }\label{fig:hetcyclesexa1}
\end{subfigure}
\begin{subfigure}{0.3\textwidth}
\includegraphics[width=\textwidth]{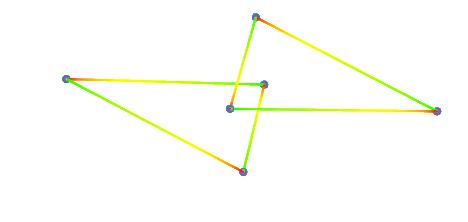}
\caption{$(-3,-3)$ }\label{fig:hetcyclesexa2}
\end{subfigure}
\begin{subfigure}{0.3\textwidth}
\includegraphics[width=\textwidth]{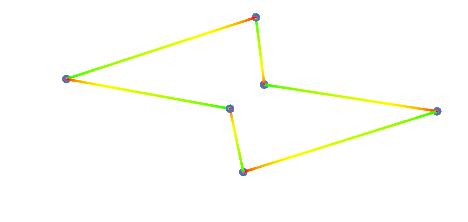}
\caption{$(3,-3)$ }\label{fig:hetcyclesexa3}
\end{subfigure}

\caption{Numerical simulation of $\dot{x}=\lambda x + Q_N(x)$ for a 3-coupled ring with $\lambda=1$. 
The parameters $(b,c)$ are indicated in the respective figure. These simulations illustrate the different behaviours in Proposition~\ref{prop:hetcycle}.}
\label{fig:hetcyclesexa}
\end{figure}

The space of coefficients $(b,c)\in\mathbb{R}^2$ is divided into three regions by the curve $b^2+4(c+1)=0$ and the line $c=-1$. Figure~\ref{fig:divbcBi} places the dynamics on the invariant circles $B_i$ according with Figure~\ref{fig:dynonbi} in each region of the space of coefficients $(b,c)$. 

\begin{figure}[ht]
\centering
\begin{tikzpicture}
  \draw[->] (-3, 0) -- (3, 0) node[right] {$b$};
  \draw[->] (0, -3) -- (0, 0.5) node[above] {$c$};
  \draw[scale=1, domain=-3:3, smooth, variable=\b, blue] plot ({\b}, {-1-\b*\b/4});
  \draw[scale=1, domain=-3:3, smooth, variable=\b, red]  plot ({\b}, {-1});

  \node (a) at (-0.4,-0.5)   [circle] {(a)}; 
	\node (b) at (-1.3,-1)   [circle] {(b)}; 
	\node (c) at (-2.5,-1.5)   [circle] {(c)}; 
	\node (d) at (-2,-2)   [circle] {(d)}; 
	\node (e) at (-1.3,-2.5)   [circle] {(e)}; 
\end{tikzpicture}
\caption{Division of the parameters $b,c$. The letters (a), (b), (c), (d), (e) indicate the regions where the respective dynamic described in Figure~\ref{fig:dynonbi} occurs.}
\label{fig:divbcBi}
\end{figure}

Recalling the division of parameters in Figure~\ref{fig:divbcBi} and the dynamics at the circle $B_i$ represented in Figure~\ref{fig:dynonbi}. Except on the point $(b,c)=(0,-1)$, a transcritical bifurcation occurs when the parameters cross the line $c=-1$ at the equilibria, $\pm e_i$, and a fold bifurcation occurs when they cross the line $b^2+4(c+1)=0$ at the equilibria $\pm(u_1,u_2,0,\dots,0)\in S^{n-1}$ such that $u_2=u_1 b/2$.
The linearization of $\mathcal{P}_N$ around the equilibria $\pm e_i$ is invertible in the complementary of $B_i$ in $S^{n-1}$
In the next remark, we check that linearisation of $\mathcal{P}_N$ around the equilibria $\pm(u_1,u_2,0,\dots,0)\in S^{n-1}$ such that $u_2=u_1 b/2$ is invertible in the complement of $B_i$ in $S^{n-1}$, if $b\neq \pm \sqrt{2 \left(\sqrt{5}-1\right)}$.
Thus, the steady states appear and disappear exclusively on the invariant space $B_i$.
And there exists a fold bifurcation in the invariant space $B_i$ when the parameters cross the line $b^2+4(c+1)=0$, except at the three points $0,\pm \sqrt{2 \left(\sqrt{5}-1\right)}$.

\begin{rem}
Let $b,c\in\mathbb{R}$ such that $b^2+4(c+1)=0$ and $b\neq 0$.  
Denote the equilibrium on (\ref{eq:edoring}) by  $u^*=(u_1^*,u_2^*,0,\dots,0)\in S^{n-1}$ such that $u_2^*=u_1^* b/2$.
We have that $\pm u_1^*= \pm\frac{2}{\sqrt{b^2+4}}$,  
$$DQ_N(\pm u^*)=\begin{bmatrix}
-\frac{12}{b^2+4}& 0 &0&\dots&0&\frac{4 b}{b^2+4}\\
-\frac{4 b}{b^2+4}& -\frac{4}{b^2+4} &0&\dots&0&0\\
0& 0 &-\frac{b^2}{4}&\dots&0&0\\
\vdots& \vdots &\vdots&\ddots&\vdots&\vdots\\
0& 0 &0&\dots&0&0\\
0& 0 &0&\dots&0&0
\end{bmatrix},$$
so, $DQ_N(u^*)u^*=-\frac{12}{b^2+4} u^* $ and $\langle Q_N(u^*),u^*\rangle=-\frac{4}{b^2+4}$.
The Jacobian of $Q_N$ has a block structure, where the first two columns and lines correspond to the directions of $u^*$ and $B_i$. The eigenvalues of $DQ_N(\pm u^*)$ are $-\frac{12}{b^2+4}, -\frac{4}{b^2+4}, -\frac{b^2}{4},0,\dots,0$.
Since the vector $u^*$ is an eigenvector associated with the eigenvalue $-\frac{12}{b^2+4}$, it follows from Lemma~\ref{lem:spherelinear} that the linearisation of $\mathcal{P}_N$ around $u^*$ has the following eigenvalues:
$$ 0, -\frac{b^2}{4}+\frac{4}{b^2+4},\frac{4}{b^2+4},\dots,\frac{4}{b^2+4}.$$

If $-\frac{b^2}{4}+\frac{4}{b^2+4}\neq 0\Leftrightarrow b\neq \pm \sqrt{2 \left(\sqrt{5}-1\right)}$, there is only one vanishing eigenvalue which has the associated eigenvectors in the tangent space $T_{\pm u^*} S^{n-1}\cap B_{1}$.
So along the line $b^2+4(c+1)=0$ and except in three points $0,\pm \sqrt{2 \left(\sqrt{5}-1\right)}$, the linearisation of $\mathcal{P}_N$ around the equilibria $\pm(u_1,u_2,0,\dots,0)\in S^{n-1}$ such that $u_2=u_1 b/2$ is invertible in the complement of $B_i$ in $S^{n-1}$.
\end{rem}

From the point of view of the equivariant theory, the previous heteroclinic cycles can be seen as homoclinic connections between the same equilibrium.
The poles $\pm e_1, \pm e_2, \dots, \pm e_n$ are in the same orbit by the action of the group $\mathbb{Z}_n\times \mathbb{Z}_2$. Moreover, the connections $e_i\rightarrow e_{i+1}$ are mapped to the connections $\pm e_{i+1}\rightarrow \pm e_{i+2}$ by the action of $\mathbb{Z}_n\times \mathbb{Z}_2$.
And the connections $e_i\rightarrow -e_{i+1}$ are mapped to the connections $\pm e_{i+1}\rightarrow \mp e_{i+2}$ by the action of $\mathbb{Z}_n\times \mathbb{Z}_2$.
So the heteroclinic network given by Proposition~\ref{prop:hetcycle}~(i) corresponds to two homoclinic connections around the same equilibrium.
And the heteroclinic cycles given by Proposition~\ref{prop:hetcycle}~(ii) correspond to just one homoclinic connection around the same equilibrium.
Note that the class of $\mathbb{Z}_n\times \mathbb{Z}_2$-equivariant vector fields is bigger than the class of $n$-ring-coupled systems.

\subsection{Stability of the heteroclinic network}

The heteroclinic network is asymptotically stable if initial conditions close enough to the heteroclinic network always stay close to the heteroclinic network and  converge to the network as time passes.

\begin{defi}{\cite[Definition 2.3.]{KM95}}
A heteroclinic network $\Sigma$ is said to be asymptotically stable if for any neighbourhood $U$ of $\Sigma$, there exists a smaller neighbourhood $V$ such that trajectories starting in $V$ remain in $U$ for all forward time and they are asymptotic to $\Sigma$.
\end{defi}

\begin{prop}
Let $N$ be the $n$ ring and $q(y,z)=- y^3+b y^2 z + c y z^2$ such that $Q_N$ is contracting. Suppose that $b^2+4(c+1)<0$. The heteroclinic network given in Proposition~\ref{prop:hetcycle} is asymptotically stable if and only if $n=3$ and $c<-2$.
\end{prop}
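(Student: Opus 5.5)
The proof splits into $n>3$ (instability from dimension counting) and $n=3$ (a planar return-map argument on $S^2$). Since $b^2+4(c+1)<0$ forces $c<-1$, the linearisation computed in the proof of Proposition~\ref{prop:hetcycle} via Lemma~\ref{lem:spherelinear} shows that each pole $\pm e_i$ has:
\begin{itemize}
\item one stable eigenvalue $c+1<0$, with eigenvector tangent to the invariant circle $B_i$, along which the incoming connections arrive;
\item $n-2$ unstable eigenvalues, all equal to $1$.
\end{itemize}
By the $\mathbb{Z}_2\times\mathbb{Z}_n$ symmetry it is enough to argue at $e_1$.

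\textbf{Case $n>3$.} Here $W^u(e_1)$ has dimension $n-2\geq 2$. I will use the standard fact that an asymptotically stable compact invariant set $\Sigma$ contains the unstable manifold of each of its equilibria. The reason is that points of $W^u(p)\setminus\{p\}$ arbitrarily close to $p$ have backward orbits converging to $p$. Hence, if such a point lies outside $\Sigma$, there are orbits starting in every neighbourhood $V$ whose forward orbit leaves a fixed neighbourhood $U$ disjoint from that point. But the network $\Sigma$ is a finite union of equilibria and one-dimensional connections lying in the circles $B_i$. So it cannot contain a manifold of dimension at least $2$, and $\Sigma$ is not asymptotically stable. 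For $n=3$ this obstruction disappears: the unstable manifold is one-dimensional, and both of its branches are connections of the network by Proposition~\ref{prop:hetcycle}(i).

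\textbf{Case $n=3$: local geometry.} The sphere is $S^2$, and each pole is a saddle with eigenvalues $c+1$ and $1$. All four branches of its invariant manifolds belong to $\Sigma$. Therefore $\Sigma$ is the 1-skeleton of an octahedron, and it cuts $S^2$ into eight triangular faces. Each face is bounded by a three-cycle of saddles, and a trajectory starting near $\Sigma$ lies in one face and circulates along its boundary. Using the standing simplifying assumption that the flow near each equilibrium is equivalent to its linearisation, I set up the usual Dulac/transition maps:
\begin{itemize}
\item the local passage near a saddle maps the distance $x$ from the incoming connection to $x^{\rho}$, with $\rho=-(c+1)/1=-(c+1)$;
\item the global maps along connections are diffeomorphisms, $x\mapsto Kx+o(x)$ with $K>0$.
\end{itemize}

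\textbf{Case $n=3$: return map and conclusion.} By the $\mathbb{Z}_2\times\mathbb{Z}_3$ symmetry, which permutes the faces and their boundary connections, all faces are equivalent. The return map to a cross-section of one face is
\[
x\mapsto C x^{\rho^3}(1+o(1)), \qquad C>0 .
\]
\begin{itemize}
\item If $\rho>1$, i.e.\ $c<-2$, this map contracts to $0$ superexponentially. Orbits therefore stay near the boundary of the face and converge to it, and taking the union over the eight faces gives asymptotic stability.
\item If $\rho<1$, i.e.\ $-2<c<-1$, small $x$ are pushed away from $0$, so $\Sigma$ is repelling within each face and not stable.
\end{itemize}

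\textbf{Main obstacle.} The delicate case is $c=-2$, where $\rho=1$. There the return map is $x\mapsto Cx$ to leading order, and stability is decided by the constant $C$ and the higher-order terms rather than by the eigenvalues. I would first try to exploit the symmetry: the reflection $x\mapsto -x$ together with the explicit invariant circles may force $C=1$ under the linearisation assumption, so the return map is neutral and orbits do not converge to $\Sigma$. Failing that, I would compute the global maps along $B_i$ explicitly using the reduced equation $\dot\theta$ on the circle at $c=-2$. Establishing that the borderline case is not asymptotically stable is the step I expect to require the most care.
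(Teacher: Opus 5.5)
\textbf{Assessment.} Your argument for $n>3$ is correct. It takes a slightly different and cleaner route than the paper: the paper exhibits an expanding direction through the invariant lines $C_{j,k}$, on which the poles are sources, whereas you note that an $(n-2)$-dimensional unstable manifold cannot lie inside a one-dimensional network. For $n=3$ and $c\neq-2$, your Dulac return map is exactly the content of the Krupa--Melbourne criterion the paper cites. You also do not need the linearisation assumption, which the paper imposes only in the last section, since planar hyperbolic saddles are $C^1$-linearisable.

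\textbf{The gap.} The genuine gap is the borderline $c=-2$, which the ``only if'' direction requires, and your proposed way out cannot work. Two things go wrong. First, the claim that $\mathbb{Z}_2\times\mathbb{Z}_3$ makes all faces equivalent is false. Six group elements act on eight octants, giving two orbits: $\{(+,+,+),(-,-,-)\}$ and the six mixed octants. Moreover, for $b\neq0$ no symmetry relates connections in quadrants with $u_iu_{i+1}>0$ to those with $u_iu_{i+1}<0$. Second, at $\rho=1$ nothing forces the linear coefficient of a face return map to be $1$. Since $\mathrm{div}\,\mathcal{P}_N$ vanishes at the poles (its value there is $c+2$), Liouville's formula for the derivative of a Poincar\'e map gives $P(x)=e^{F}x\,(1+o(1))$. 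Here $F$ is the sum, over the three boundary connections of the face, of $\int\mathrm{div}\,\mathcal{P}_N\,dt$; these integrals converge.

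\textbf{The borderline can be computed.} Take $c=-2$ and parametrise $B_1$ by $u=(\cos\theta,\sin\theta,0)$. One finds $\mathrm{div}\,\mathcal{P}_N=b\,u_1u_2(2-5u_2^2)$ and $\dot\theta=-u_1u_2(1-b\,u_1u_2)$. A connection with $u_1u_2>0$ therefore contributes $b\int_0^{\pi/2}\frac{2-5\sin^2\theta}{1-b\sin\theta\cos\theta}\,d\theta=-\frac b2J(b)$, by symmetrising under $\theta\mapsto\pi/2-\theta$. Here $J(b)=\int_0^{\pi/2}\frac{d\theta}{1-\frac b2\sin2\theta}=\frac{\pi/2+\arcsin(b/2)}{\sqrt{1-b^2/4}}$. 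A connection with $u_1u_2<0$ contributes $\frac b2J(-b)$, and by symmetry the same values hold on $B_2$ and $B_3$. Hence $F=-\frac{3b}{2}J(b)$ on the two octants with all signs equal, and $F=\frac b2\bigl(2J(-b)-J(b)\bigr)=\frac{b\,(\pi/2-3\arcsin(b/2))}{2\sqrt{1-b^2/4}}$ on the other six.

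\textbf{Consequence.} For $1<b<2$ both values are negative, so every face return map contracts and the network \emph{is} asymptotically stable at $c=-2$. The hypotheses do hold there: $b^2+4(c+1)=b^2-4<0$, and $Q_N$ is contracting because $h_{-1/2}(x,y)=x^2(-\frac12x^2+bxy-2y^2)-\frac12y^4<0$. So the ``only if'' part is false at $c=-2$, $1<b<2$, and no argument can close your borderline case as the statement demands. Your guess $C=1$ is correct only at $b=0$, where $u_1u_2u_3$ is a first integral. The paper's proof does not treat $c=-2$ either, since the Krupa--Melbourne result only covers $\rho\neq1$. What both arguments actually establish is asymptotic stability when $n=3$ and $c<-2$, and its failure when $n>3$ or $-2<c<-1$.
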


\begin{proof}
In order to be stable, the heteroclinic cycle must not have expanding directions out of the cycle. When $n>3$, the equilibria $e_j$ and $e_k$ are sources in the flow-invariant one-dimensional space $C_{j,k}=\bigcap_{i\neq j,k}{A_i}$ since the eigenvalue of $\mathcal{T}_{e_i} S^{n-1}\cap B_n$ is $-a>0$ when $k\neq j-1,j+1$. Recall that there is an equilibrium point between $e_j$ and $e_k$ in $C_{j,k}$.
So there exists an expanding direction out of the cycle at each equilibrium $e_i$ for $i=1,\dots,n$. And the heteroclinic cycle cannot be asymptotically stable.
 
Following \cite[Theorem 2.9]{KM95}, the heteroclinic cycle $e_1\rightarrow e_3\rightarrow e_{2}\rightarrow e_1$ is stable if the contracting eigenvalue, $c-a$, is stronger than the expanding eigenvalue, $-a$. 
So the heteroclinic cycle is stable if $n=3$ and $$|c-a|>-a\Leftrightarrow c<2a<0.$$ 
\end{proof}

For $b^2+4(c+1)\geq 0$, the unstable manifold of the equilibria $\pm e_i$ is not contained in the heteroclinic cycle.
So the heteroclinic cycle is not asymptotically stable. 
Next, we will focus on the case where there is a heteroclinic network $b^2+4(c+1)<0$.

\section{Dynamic on the invariant sphere}\label{sec:perturb}

Fix $n=3$, $N$ the $3$-ring network, $a=-1$ and $b,c\in\mathbb{R}$ such that $b^2+4(c+1)<0$. 
In this section, we will study the dynamics of the $3$-ring-coupled system on the invariant sphere $S^{n-1}$ when the parameter $d$ is perturbed.
Consider the polynomial $q_d(y,z)=-y^3+b y^2 z + c y z^2+d z^3$ for small $d\in\mathbb{R}$ and the corresponding ring-coupled system given by $Q_N(x,d)\in P_N^3$.
Suppose that $Q_N(.,0)$ is contracting as a consequence of the conditions in Proposition~\ref{prop:contrac}.
Since the conditions in Proposition~\ref{prop:contrac} vary continuously with the coefficient $d$, we have that $Q_N(x,d)$ is still contracting for small values of $d$.
This means that the invariant sphere theorem can still be applied and we study the dynamical system on the sphere $u\in S^{2}$
$$\dot{u}=\mathcal{P}_N(u,d).$$

Recall the $\mathbb{Z}_3$ action $Z(u_1,u_2,u_3)=(u_2,u_3,u_1)$ and the antipodal action $S(u_1,u_2,u_3)=-(u_1,u_2,u_3)$ in $\mathbb{R}^3$.
The previous system is $\mathbb{Z}_3\times \mathbb{Z}_2$ equivariant.
Thus, we can study the dynamics around one of the poles, say $e_1$, and use the symmetry to understand the dynamics around the other poles.
In fact, we can study the dynamics in the fundamental domain, which is the quotient of the sphere by the action of the group $\mathbb{Z}_3\times \mathbb{Z}_2$.

The heteroclinic network connecting the poles $\pm e_1,\pm e_2,\pm e_3$ in the sphere corresponds to two homoclinic connections to a single equilibrium in the fundamental domain when $d=0$.
It follows from Proposition~\ref{prop:hetcycle} that there is a heteroclinic network connecting the poles $\pm e_1,\pm e_2,\pm e_3$ for $d=0$.
The poles belong to the same action orbit.
Denote by $e\equiv \pm e_i $ the corresponding point in the fundamental domain.
The heteroclinic connections between poles with the same signal $\pm e_i \mapsto \pm e_i$ belong to one action orbit and lead to a homoclinic orbit around $e$.
The other heteroclinic connections $\pm e_i \mapsto \mp e_i$ are in another class of the fundamental network and correspond to another homoclinic orbit around $e$.
In the fundamental domain, the heteroclinic network is transformed into two homoclinic orbits connecting the equilibrium $e$ to itself.

Since $b^2+4(c+1)<0$, we can find a neighbourhood $V$ of the two homoclinic orbits with a single equilibrium for $d$ sufficiently small.
Recall the dynamic in the circle $B_i$ as represented in Figure~\ref{fig:dynonbi5}. 
As we change $d$, there is no steady-state bifurcation in the heteroclinic connections. 
Moreover, there exists a smooth perturbation of the equilibrium $e_i$ that we call $u_i(d)$ where $i=\pm1, \pm 2, \pm 3$.
These equilibrium points belong to the same action orbit: $u_1(d)$, $u_3=Z(u_1(d))$, $u_2(d)=Z(Z(u_1(d)))$, $u_{-1}(d)=S(u_1(d))$, $u_{-3}(d)=S(Z(u_1(d)))$ and $u_{-2}(d)=S(Z(Z(u_1(d))))$.
Denote by $u(d)$ the corresponding equilibrium in the fundamental domain.
There exists a neighbourhood of the heteroclinic network where there is no other equilibrium apart from the $6$ perturbed equilibria, $u_{i}(d)$.
And the corresponding neighbourhood of the two homoclinic orbits in the fundamental domain has a single equilibrium. 
We will restrict our attention to such neighbourhoods.

Let $I_{+}, I_{-}\subset V$ be two cross sections transversal to each of the homoclinic orbits in the fundamental domain.
These cross sections are transversal to the stable manifold of $e_i$, $W^s(e_i)$ and they are still transverse to the stable manifold $W^s(u_i(d))$ for $d$ sufficiently small.
Take $D_{+}\subset I_{+}$ and $D_{-}\subset I_{-}$ in a way that trajectories starting in $D_+$ or $D_-$ cross $I_{+}\cup I_{-}$ after some time and define the Poincaré map $$\Pi_d:D_+\cup D_-\to I_{+}\cup I_{-}.$$
This Poincaré map describes the dynamic in a neighbourhood of the two homoclinic orbits.

We will express this Poincaré map in local coordinates around one equilibrium.
The two cross sections $I_{+}, I_{-}$ in the fundamental domain correspond to $12$ cross sections of the heteroclinic network in $S^2$, two in each of the $6$ equilibria.
Let $I^i_{+}, I^i_{-}\subset S^2$ be the two cross sections transversal to the stable manifold of $u_i(d)$, where $i= \pm 1, \pm 2, \pm 3$. 
The domain of the Poincaré map $\Pi_d$ is also lifted to each of the cross sections, $D^i_{+}\subset I^i_{+}$ and $D^i_{-}\subset I^i_{-}$ where $i=\pm1, \pm 2, \pm 3$.
The trajectories starting in $D^1_+$ or $D^1_-$ cross the outgoing sections $O^1_+:=I^{3}_{+}$ and $O^1 _-:= I^{-3}_{-}$.
So, we have a local Poincaré map around the pole $e_1$, $$\Pi^1_d: D^1_+\cup D^1_-\to O^1_{+}\cup O^1_{-}.$$
Note that the forward trajectories starting in $D^1_+$ or $D^1_-$ are inside the half-sphere containing $e_1$.
So, the Poincaré map $\Pi^1_d$ can be deduced from the local coordinates given by the stereographic projection around $e_1$.

\begin{figure}[ht]
	\centering
\begin{tikzpicture}[scale=0.4, purplecurve/.style={line width=1.6pt,violet}, yellowcurve/.style={line width=1.6pt,green} ] 
\draw (0,0) node[above left] {$e_{1}$}; 
\draw[->] (-4,0) -- (4,0) node[right] {$e_{2}$}; 
\draw[->] (0,-4) -- (0,4) node[above] {$e_{3}$}; 

\draw (2.5,1) node[right] {$I_A$}; 
\draw (2.5,-1) node[right] {$I_B$};
\draw (-2.5,1) node[left] {$I_C$}; 
\draw (-2.5,-1) node[left] {$I_D$};

\draw (0.8,3.5) node[right] {$O_A$}; 
\draw (-0.8,3.5) node[left] {$O_B$};
\draw (0.8,-3.5) node[right] {$O_D$}; 
\draw (-0.8,-3.5) node[left] {$O_C$};

\draw (-1,3.5) -- (1,3.5); 
\draw (-1,-3.5) -- (1,-3.5); 
\draw (-2.0,-1.2) -- (-2.0,1.2); 
\draw (2.0,-1.2) -- (2.0,1.2); 
\draw[purplecurve] (2,0.5) to[out=180,in=270] (0.4,3.5); 
\draw[purplecurve] (2,-0.5) to[out=180,in=90] (0.4,-3.5); 
\draw[purplecurve] (-2,0.5) to[out=0,in=-90] (-0.4,3.5); 
\draw[purplecurve] (-2,-0.5) to[out=0,in=90] (-0.4,-3.5); 
\end{tikzpicture}
\caption{Incoming and outgoing cross sections on the local coordinates around $e_1$.}
	\label{fig:poincaremap}
\end{figure}

Before we deduce the Poincaré map $\Pi^1_d$, we make two observations.
As the stereographic projection sends the half-sphere $\{u_1>0:u\in S^2\}$ into the full plane, the heteroclinic connections are no longer compact.
And the cross sections may be far away from the origin. 
See Figure~\ref{fig:poincaremap}.
The cross sections $D^1_{+}$ and $ D^1_{-}$ are each split into two cross sections by the stable manifold. 
Denote by $I_A$, $I_B$, $I_C$ and $I_D$ the $4$ cross sections as displayed in Figure~\ref{fig:poincaremap}. 
Defining the corresponding outgoing cross sections using the $\mathbb{Z}_3\times \mathbb{Z}_2$ symmetry, we have that $O^1_+=I^{3}_{+}=Z(I^{1}_{+})$ and $O^1 _-= I^{-3}_{-}= S(Z(I^{1}_{-}))$.
So, we obtain $O_A$, $O_B$, $O_C$ and $O_D$ as labelled in Figure~\ref{fig:poincaremap}.
Note that $O^1_+$ and $I^{1}_{+}$ are glued preserving the orientation, and the $O^1_-$ and $I^{1}_{-}$ are glued reversing the orientation.
Thus, in the fundamental domain, we are studying the dynamic on an annulus glued with a Möbius strip. 

The Poincaré map $\Pi_d$ is defined by defining the Poincaré map $\Pi^A_d$, $\Pi^B_d$, $\Pi^C_d$, and $\Pi^D_d$ for each cross section. Identifying the incoming and outgoing cross sections with the same labels, we have that 
$$\Pi^A_d: I_A\to I_A\cup I_B,\quad \Pi^B_d: I_B\to I_C\cup I_D,\quad \Pi^C_d: I_C\to I_A\cup I_B,\quad \Pi^D_d: I_D\to I_C\cup I_D.$$

\subsection{Poincaré maps around the heteroclinic network}

First, we compute the system in local coordinates around the pole $e_1$ on half of the sphere.
 
\begin{lem}\label{lem:2dsystem}
 Let $q(y,z)=a y^3+b y^2 z+c y z^2+d z^3$.	
	The system (\ref{eq:edosphere}) in the half-sphere $\{u\in S^{n-1}:u_1>0 \}$ around the point $e_1\in S^{n-1}$ is equivalent to the following $n-1$ dimensional system:
$$\dot{p}_i=\dfrac{q(p_i,p_{i-1})-p_i q(1,p_n)}{1+p_2^2+\dots +p_n^2},$$
where $p_1=1$, $p_i\in\mathbb{R}$ and $i=2,\dots,n$.

For $n=3$, the Taylor expansion of the previous system around $(p_2,p_3)=(0,0)$ is given by 
$$\begin{cases}
\dot{p}_2 =d+(c-a)p_2+(b-d)p_2^2  -b p_3 p_2-d p_3^2+(2a-c) p_2^3 +(a-2c)p_3^2 p_2 \\
\dot{p}_3 = -a p_3- bp_3^2+a p_3 p_2^2+(2a-c)p_3^3+bp_3^2p_2+cp_3p_2^2+dp_2^3 
\end{cases}+\mathcal{O}(4),$$
where $\mathcal{O}(4)$ denotes the terms of order four or higher in $p_2$ and $p_3$.
\end{lem}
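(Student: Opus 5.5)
The plan is to avoid computing directly on the sphere. Instead I would use that $\mathcal{P}_N$ is the radial projection of the homogeneous field $\dot x = Q_N(x)$, and then pass to the affine chart $p_i = x_i/x_1$ on the half-space $x_1>0$.

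First step: the phase flow is a time reparametrisation of the projected flow. Let $x(t)$ solve $\dot x = Q_N(x)$ and set $u = x/\|x\|$. Homogeneity of degree $3$ gives
$$\dot u = \frac{Q_N(x)}{\|x\|} - \frac{\langle x, Q_N(x)\rangle}{\|x\|^3}\,x = \|x\|^2\bigl(Q_N(u) - \langle Q_N(u),u\rangle u\bigr) = \|x\|^2\,\mathcal{P}_N(u).$$
Since $\|x\|^2>0$, orbits of $\mathcal{P}_N$ on $S^{n-1}$ coincide, with orientation preserved, with the radial projections of orbits of $\dot x = Q_N(x)$. This is the sense of ``equivalent'' in the statement.

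Second step: introduce coordinates $p_i = x_i/x_1$ for $i=2,\dots,n$, with $p_1=1$ and $p_0 \equiv p_n$ (recall $(Q_N)_i(x) = q(x_i,x_{i-1})$ and $x_0=x_n$). The map $u \mapsto (u_2/u_1,\dots,u_n/u_1)$ is a diffeomorphism from $\{u_1>0\}$ onto $\mathbb{R}^{n-1}$ sending $e_1$ to $0$; its inverse is $u = (1,p_2,\dots,p_n)/\sqrt{1+\sum p_j^2}$. Differentiating the quotient and using homogeneity,
$$\dot p_i = \frac{(Q_N)_i(x)\,x_1 - x_i\,(Q_N)_1(x)}{x_1^2} = x_1^2\bigl(q(p_i,p_{i-1}) - p_i\, q(1,p_n)\bigr).$$
This computation is independent of whether we work on the sphere or on rays. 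Taking the representative on the unit sphere, $x_1^2 = 1/(1+p_2^2+\dots+p_n^2)$, which gives the stated formula. Equivalently, one can rescale time by the positive factor $x_1^2\|x\|^2$ relative to the flow of $\mathcal{P}_N$.

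Third step: the Taylor expansion for $n=3$, where $p_0 = p_3$. Write
$$\frac{1}{1+p_2^2+p_3^2} = 1 - p_2^2 - p_3^2 + \mathcal{O}(4).$$
For the first equation, expand the numerator:
$$q(p_2,1) - p_2\,q(1,p_3) = d + (c-a)p_2 + b p_2^2 - b p_2 p_3 + a p_2^3 - c p_2 p_3^2 + \mathcal{O}(4).$$
Multiplying by the factor contributes $-d p_2^2 - d p_3^2 - (c-a)p_2^3 - (c-a)p_2 p_3^2$. Collecting terms gives the coefficients $b-d$, $-d$, $2a-c$ and $a-2c$ as claimed.

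For the second equation, the numerator is
$$q(p_3,p_2) - p_3\,q(1,p_3) = a p_3^3 + b p_3^2 p_2 + c p_3 p_2^2 + d p_2^3 - a p_3 - b p_3^2 - c p_3^3 + \mathcal{O}(4).$$
Multiplying by the factor adds $a p_3 p_2^2 + a p_3^3$, which yields the stated expression.

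The main point needing care is the first step, justifying that the chart dynamics differ from $\mathcal{P}_N$ only by a positive time rescaling. The remaining steps are bookkeeping, where the only real risk is tracking the ring index convention $p_0 = p_n$ correctly.
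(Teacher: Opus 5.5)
Your proposal is correct and is essentially the paper's proof: the paper uses the same chart $p_i=u_i/u_1$, removes the radial term $\langle Q_N(u),u\rangle u$ via $D\phi^{-1}_{e_1}(u)u=0$ (your quotient-rule computation does the same thing), uses homogeneity to extract the factor $1/(1+p_2^2+\dots+p_n^2)$, and then Taylor-expands exactly as you did, and your coefficients check out. The detour through $\dot x=Q_N(x)$ and time rescaling is unnecessary, since the chart conjugates $\mathcal{P}_N$ to the stated system exactly; also, your parenthetical factor $x_1^2\|x\|^2$ is misstated (by your own first step the factor is $\|x\|^2$), but nothing in the argument depends on it.
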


\begin{proof}

We consider the following stereographic projection from the tangent space $T_{e_1}S^{n-1}$ to the half-sphere $\{u\in S^{n-1}:u_1>0 \}$.
Let $P_{e_1}=\{(0,p_2,\dots,p_n)\in \mathbb{R}^n:p_i\in\mathbb{R}\}=T_{e_1}S^{n-1}\cong \mathbb{R}^{n-1}$ and $\phi_{1}:\mathbb{R}^{n-1}\rightarrow S^{n-1}$ given by
$$ (p_2,\dots,p_n)\mapsto \dfrac{(1,p_2,\dots,p_n)}{\|(1,p_2,\dots,p_n)\|}.$$
Note that $\phi_{1}(\mathbb{R}^{n-1})=\{u\in S^{n-1}:u_1>0 \}\subset S^{n-1}$ and 
$$\phi^{-1}_{e_1}(u)=(\dfrac{u_2}{u_1},\dfrac{u_3}{u_1},\dots, \dfrac{u_n}{u_1}).$$
This change of coordinates is invertible and smooth.

Around the equilibrium $e_1$, the system (\ref{eq:edosphere}) can be written in terms of $p=\phi^{-1}_{e_1}(u)\in \mathbb{R}^{n-1}$ as
$$\dot{p}=  D \phi^{-1}_{e_1}(\phi(p)).\dot{u}=D \phi^{-1}_{e_1}(\phi(p)).(Q_N(\phi(p))-\langle Q_N(\phi(p)),\phi(p)\rangle \phi(p))$$
Note that 
$$D \phi^{-1}_{e_1}(u)=\begin{bmatrix}
-\dfrac{u_2}{u_1^2}& \dfrac{1}{u_1}&0&\dots &0\\
-\dfrac{u_3}{u_1^2}& 0&\dfrac{1}{u_1}&\dots &0\\
\vdots& \vdots& \vdots& \ddots& \vdots\\
-\dfrac{u_n}{u_1^2}& 0&0&\dots &\dfrac{1}{u_1}\\
\end{bmatrix}\quad \textrm{ and }\quad D\phi^{-1}_{e_1}(u) u=0.$$
and 
$$\dot{p}= \dfrac{\|(1,p_2,\dots, p_n)\|}{\|(1,p_2,\dots, p_n)\|^3}\begin{bmatrix}
-p_2& 1 &0&\dots &0\\
-p_3& 0&1&\dots &0\\
\vdots& \vdots& \vdots& \ddots& \vdots\\
-p_n & 0&0&\dots &1
\end{bmatrix}Q_N(1,p_2,\dots,p_3)$$
$$=
\dfrac{1}{\|(1,p_2,\dots, p_n)\|^2}\begin{bmatrix}
-p_2& 1 &0&\dots &0\\
-p_3& 0&1&\dots &0\\
\vdots& \vdots& \vdots& \ddots& \vdots\\
-p_n & 0&0&\dots &1
\end{bmatrix}\begin{bmatrix}
q(1,p_n)\\
q(p_2,1)\\
\vdots\\
q(p_n,p_{n-1})
\end{bmatrix}
$$
$$
=\dfrac{1}{1+p_2^2+\dots +p_n^2}\begin{bmatrix}
q(p_2,1)-p_2q(1,p_n)\\
q(p_3,p_2)-p_3q(1,p_n)\\
\vdots\\
q(p_n,p_{n-1})-p_n q(1,p_n)\\
\end{bmatrix}.$$
So $$\dot{p}_i=\dfrac{q(p_i,p_{i-1})-p_i q(1,p_n)}{1+p_2^2+\dots +p_n^2},$$
where $p_1=1$ and $i=2,\dots,n$.

We have the following Taylor expansion up to order $4$ of 
$$\dfrac{1}{(1+p_2^2+\dots + p_n^2)}=1-(p_2^2+\dots + p_n^2)+\mathcal{O}(4)$$
So the vector field can be written as 
$$\dfrac{q(p_i,p_{i-1})-p_i q(1,p_n)}{1+p_2^2+\dots +p_n^2}=q(p_i,p_{i-1})-p_i q(1,p_n)-(p_2^2+\dots +p_n^2)(q(p_i,p_{i-1})-p_i q(1,p_n))+\mathcal{O}(4)$$

For $i=2$, we have 
\begin{align*}
\dot{p}_2=&q(p_2,p_1)-p_2 q(1,p_n)-(p_2^2+\dots +p_n^2)(q(p_2,p_1)-p_2 q(1,p_n))+\mathcal{O}(4)\\
 =&d+(c-a)p_2+(b-d)p_2^2 -d(p_3^2+\dots +p_n^2) -b p_n p_2\\
 & +(2a-c) p_2^3 +(a-c) p_2(p_3^2+\dots +p_{n-1}^2) +(a-2c)p_n^2 p_2 +\mathcal{O}(4)
\end{align*}
For $i=3,\dots,n$, we have 
\begin{align*}
\dot{p}_i	=&q(p_i,p_{i-1})-p_i q(1,p_n)-(p_2^2+\dots +p_n^2)(q(p_i,p_{i-1})-p_i q(1,p_n))+\mathcal{O}(4)\\
					=& -ap_i -bp_np_i\\
					&+q(p_i,p_{i-1})+a p_i(p_2^2+\dots +p_{n-1}^2)+(a-c)p_i p_n^2 +\mathcal{O}(4) 
\end{align*}

For $n=3$, the system (\ref{eq:edosphere}) in the half-sphere around the point $(1,0,0)\in S^2$ is equivalent to the following two-dimensional system:
$$\begin{cases}
\dot{p}_2=d+(c-a)p_2+(b-d)p_2^2  -b p_3 p_2-d p_3^2+(2a-c) p_2^3 +(a-2c)p_3^2 p_2 \\
\dot{p}_3	= -a p_3- bp_3^2+a p_3 p_2^2+(2a-c)p_3^3+bp_3^2p_2+cp_3p_2^2+dp_2^3 
\end{cases}+\mathcal{O}(4)
$$

\end{proof}

In an oversimplification, we discard the second and higher-order terms given in  Lemma~\ref{lem:2dsystem} to compute the local Poincaré map $\Pi^A_d$, $\Pi^B_d$, $\Pi^C_d$, and $\Pi^D_d$.

\begin{lem}\label{lem:poincaremap}
Let $\alpha,\epsilon,\delta >0$. Consider $d,\mu,\nu>0$ small enough such that the Poincaré map $\Pi_d$ of the linear system 
$$
\begin{cases}
\dot{p}_2=\alpha d- \alpha p_2\\
\dot{p}_3=p_3
\end{cases}
$$
with the following incoming cross sections 
$$I_A=\{\epsilon\}\times ]0,\nu[,\quad I_B=\{\epsilon\}\times ]-\nu,0[,\quad
I_C=\{-\epsilon\}\times ]0,\nu[,\quad I_D=\{-\epsilon\}\times ]-\nu,0[,$$
and the following outgoing cross sections 
$$O_A=]0,\mu[\times \{\delta\},\quad O_B=]-\mu,0[ \times \{\delta\}],\quad O_C=]-\mu,0[\times \{-\delta\},\quad O_D=]0,\mu[ \times \{-\delta\},$$
is well defined.
Then, identifying the incoming and outgoing cross sections with the same labels, the Poincaré maps are given by
$$\Pi^{A}_{d}(x) = K_+(d)x^{\alpha}+d,\quad x>0,$$
$$\Pi^{B}_{d}(x) = -(K_+(d)|x|^{\alpha}+d),\quad x<0,$$
$$\Pi^{C}_{d}(x) = K_-(d)x^{\alpha}+d,\quad x>0,$$
$$\Pi^{D}_{d}(x) = -(K_-(d)|x|^{\alpha}+d),\quad x<0,$$
where $K_+(d) = ( \epsilon -d )\delta^{-\alpha}$ and $K_-(d) = -( \epsilon + d )\delta^{-\alpha}$.
\end{lem}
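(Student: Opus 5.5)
The plan is to integrate the linear system in closed form and read off, for each incoming section, the hitting time and hitting point on the outgoing sections. The two equations are decoupled. With initial condition $(p_2(0),p_3(0))$ the solution is
$$p_2(t)=d+(p_2(0)-d)e^{-\alpha t},\qquad p_3(t)=p_3(0)e^{t}.$$
Since $p_3$ grows monotonically in absolute value and keeps its sign, a point of $I_A\cup I_C$ (where $p_3(0)=x>0$) can only leave through the line $p_3=\delta$. A point of $I_B\cup I_D$ (where $p_3(0)=x<0$) can only leave through $p_3=-\delta$. In both cases the time of flight is
$$T(x)=\ln\frac{\delta}{|x|},$$
which is positive because $|x|<\nu<\delta$. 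This is where the hypothesis that $\nu,\mu,d$ are small enough for the map to be well defined is used.

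Substituting $T(x)$ gives $e^{-\alpha T}=(|x|/\delta)^{\alpha}$, so the exit abscissa is
$$p_2(T)=d+(p_2(0)-d)\,\delta^{-\alpha}|x|^{\alpha}.$$
Taking $p_2(0)=\epsilon$ (sections $I_A$, $I_B$) gives $p_2(T)=(\epsilon-d)\delta^{-\alpha}|x|^\alpha+d=K_+(d)|x|^\alpha+d$. For $d<\epsilon$ and small $|x|$ this value lies in $]0,\mu[$, so $I_A$ exits through $O_A$ and $I_B$ exits through $O_D$. Taking $p_2(0)=-\epsilon$ (sections $I_C$, $I_D$) gives $p_2(T)=d-(\epsilon+d)\delta^{-\alpha}|x|^\alpha=K_-(d)|x|^\alpha+d$. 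This lands in the $O_B$ or $O_C$ side when it is negative, and crosses to the other side when $d$ dominates; that sign change is exactly what allows the image to fall in either incoming section.

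The last step is to transport the exit point back to an incoming coordinate using the identification of outgoing with incoming sections from the preceding discussion. On the $+$ branch the gluing $O^1_+=Z(I^1_+)$ preserves orientation. It carries the $p_2$-coordinate on $O_A\cup O_B$ to the transverse coordinate on $I_A\cup I_B$ with the same sign, which gives $\Pi^A_d(x)=K_+x^\alpha+d$ and $\Pi^C_d(x)=K_-x^\alpha+d$. On the $-$ branch the gluing $O^1_-=S(Z(I^1_-))$ includes the antipodal map, which reverses the orientation of the section. Hence the exit value on $O_C\cup O_D$ enters $I_C\cup I_D$ with its sign changed, which produces the overall minus sign in $\Pi^B_d$ and $\Pi^D_d$. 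Since $x<0$ there, $x^\alpha$ must be written as $|x|^\alpha$.

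The integration itself is routine. I expect the main obstacle to be bookkeeping the gluing conventions consistently: checking that the sign flip from $S$ attaches to the $B$/$D$ branch and not the $A$/$C$ branch, and that the labels in Figure~\ref{fig:poincaremap} match the stated codomains $I_A\cup I_B$ and $I_C\cup I_D$. I would verify this by tracking a single point through $Z$ and $SZ$ explicitly in the stereographic coordinates of Lemma~\ref{lem:2dsystem}.
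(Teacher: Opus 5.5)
Your proposal is correct and matches the paper's proof: integrate the decoupled linear system, use the hitting time $T=\ln(\delta/|x|)$ and the exit abscissa $d+(p_2(0)-d)\delta^{-\alpha}|x|^{\alpha}$, and add a sign change for the branches $B$ and $D$, which exit through $p_3=-\delta$, because of the orientation-reversing gluing $O^1_-=S(Z(I^1_-))$. One caveat for your explicit check. With $\delta=1/\epsilon$ you will find $Z(\phi_1(\epsilon,x))=\phi_1(x/\epsilon,1/\epsilon)$ and $SZ(\phi_1(-\epsilon,x))=\phi_1(-x/\epsilon,-1/\epsilon)$, so an exit value $p_2$ actually corresponds to the incoming coordinate $\pm\epsilon p_2$; the stated formulas therefore rely on the convention, also used in the paper, that sections with the same label are identified coordinate-for-coordinate up to that sign.
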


\begin{proof}
Note that the stable manifold of $u_1(d)=(-d,0)$ is given by $\{p_3=0\}$ for any $d$. 
So the orbits starting at the incoming cross sections $I_A$ and $I_C$ will hit the outgoing cross sections $O_A \cup O_B$, for any $d$.
The same is valid for the orbits starting at the incoming cross sections $I_B$ and $I_D$ which will hit the outgoing cross sections $O_C \cup O_D$ for any $d$.

The solutions of the linear system are given by 
$$p_2(t) = k_2 e^{-\alpha t} +d, \qquad p_3(t) = k_3 e^{t},$$
where the constants $k_2,k_3\in\mathbb{R}$ are determined by the initial conditions.

For $\Pi^{A}_{d}$, the trajectories start at the incoming cross section $p_2(0)=\epsilon$ and hit the outgoing cross section $O_A \cup O_B$. 
In this case, the initial conditions are given by $p_2(0) = \epsilon$ and $p_3(0) = x>0$.
Then, $k_2=\epsilon - d$ and $k_3 = x$.
The time $T$ where the trajectory hits the outgoing cross section is determined by the condition $p_3(T) = \delta$. So, $T= -\ln ( \frac{x}{\delta})$. And the Poincaré map is given by 
$$\Pi^{A}_{d}(x) = p_2(T)=K_+(d)x^{\alpha}+d,$$
where $K_+(d) = ( \epsilon -d )\delta^{-\alpha}$.

For $\Pi^{B}_{d}$, the initial conditions are given by $p_2(0) = \epsilon$ and $p_3(0) = x<0$ and the crossing condition is given by $p_3(T) = -\delta$, where $T$ is the hitting time. 
Then, $k_2=\epsilon - d$, $k_3 = x$ and the same hitting time $T= -\ln ( \frac{-x}{\delta})$.
As we identifying $O_C$ with $I_C$ and $O_D$ with $I_D$, the Poincaré map is defined by the  symmetric value of $p_2(T)$, that is,
$$\Pi^{B}_{d}(x) = -p_2(T)=-K_+(d)|x|^{\alpha}-d.$$

For $\Pi^{C}_{d}$, $p_2(0) = -\epsilon$, $p_3(0) = x>0$ and  $p_3(T) = \delta$, we have that 
$k_2=-\epsilon-d$, $k_3=x>0$ and the Poincaré map is given by
$$\Pi^{C}_{d}(x) = p_2(T)=K_-(d)x^{\alpha}+d,$$
where $K_-(d) = -( \epsilon + d )\delta^{-\alpha}$.

The last Poincaré map is also the symmetric value of the respective $p_2(T)$, that is,
$$\Pi^{D}_{d}(x) = -p_2(T)=-K_-(d)|x|^{\alpha}-d,\quad x<0.$$
\end{proof}

\begin{figure}[ht]
	\centering
		\includegraphics[width=0.20\textwidth]{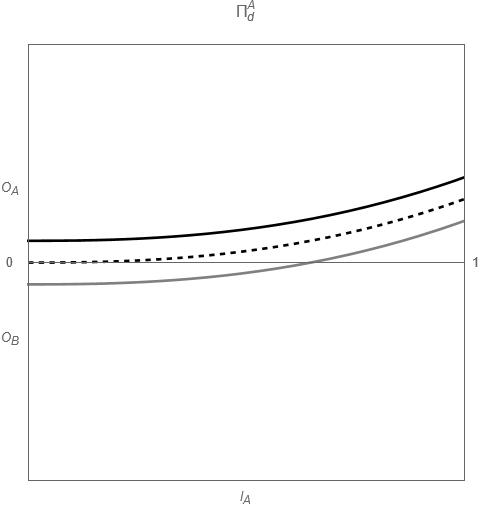}
		\includegraphics[width=0.20\textwidth]{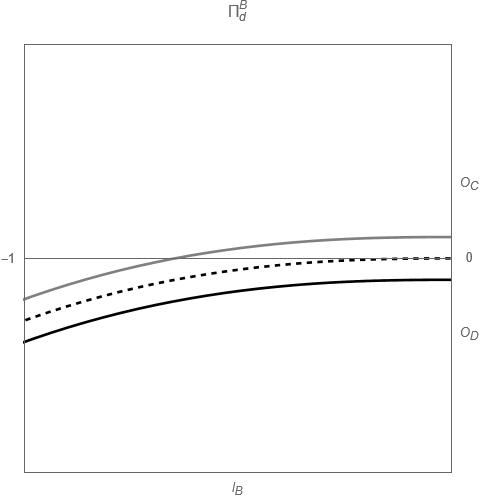}
		\includegraphics[width=0.20\textwidth]{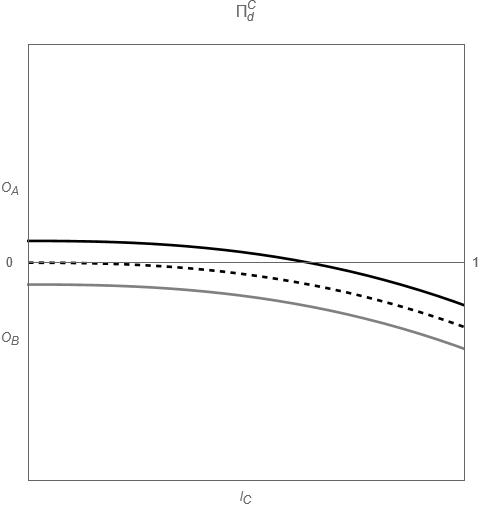}
		\includegraphics[width=0.20\textwidth]{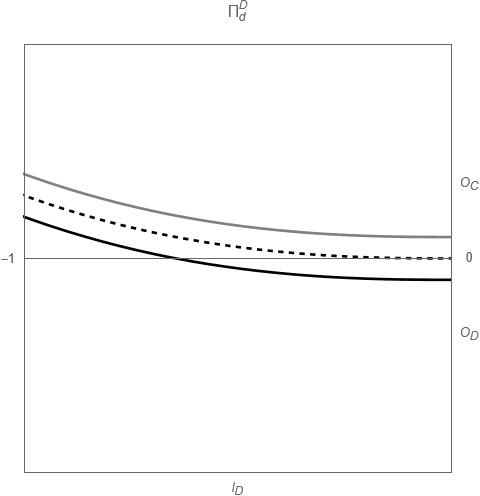}
		\caption{Plot of the Poincaré maps for $\alpha>1$ and $d>0$(thick), $d=0$(dashed), $d<0$(grey).}	
	\label{fig:piadneg}
\end{figure}

The Poincaré maps are plotted in Figure~\ref{fig:piadneg} for different values of $d$ and $\alpha>1$.

Note that $K_+(0)=\epsilon \delta^{-\alpha}>0$ and $K_-(0)=-\epsilon \delta^{-\alpha}<0$, we assume that $K_+(d)>0$ and $K_-(d)<0$ by keeping $d$ small enough. As we identify the outgoing section with the incoming section, we take $\delta=1/\epsilon$, since 
$$Z(\phi_{1}(\epsilon, x))=Z(\dfrac{(1,\epsilon, x)}{\|(1,\epsilon, x)\|})=\dfrac{(\epsilon, x, 1)}{\|(\epsilon, x, 1)\|}=\dfrac{(1, x/\epsilon, 1/\epsilon)}{\|(1, x/\epsilon, 1/\epsilon)\|}=\phi_{1}\left(\dfrac{x}{\epsilon}, \dfrac{1}{\epsilon}\right).$$
So we can assume that $|K_+(0)|=|K_-(0)|=\epsilon^{1+\alpha}<1$ by taking $\epsilon$ sufficiently small.

\subsection{Breaking the heteroclinic connections}

There are four different scenarios depending on the sign of $d$ and whether the heteroclinic network is stable or not.

\begin{theo}\label{theo:periodic_orbits}
	Let $n=3$, $\lambda>0$ and $b,c, d\in\mathbb{R}$ such that $b^2+4(c+1)<0$ and $d$ is sufficiently small.
	The system (\ref{eq:edoring}) on the invariant sphere $S^2$ has a heteroclinic network for $d=0$ and define $\alpha=-(c+1)>0$.
Assume that the system (\ref{eq:edosphere}) is topologically equivalent to the linear system in Lemma~\ref{lem:poincaremap} in a neighbourhood of the heteroclinic connections around the equilibrium $e_1$.
Then, we have the following dynamics around the destroyed heteroclinic network:
\begin{itemize}
\item If $\alpha>1$ and $d>0$, then there are three attracting periodic orbits passing nearby the poles $e_1, e_3,e_2, e_1$; $-e_1, -e_3,-e_2, -e_1$; and $e_1, -e_3, e_2, -e_1, e_3, -e_2, e_1$. 
\item If $\alpha>1$ and $d<0$, then there is one attracting periodic orbit passing nearby the poles $e_1, -e_3, -e_2, e_1,e_3, -e_2, -e_1, e_3, e_2, -e_1, -e_3, e_2, e_1$.
\item If $\alpha<1$ and $d>0$, then there is one repelling periodic orbit passing nearby the poles $e_1, -e_3, e_2, -e_1, e_3, -e_2, e_1$. 
\item If $\alpha<1$ and $d<0$, then there are two repelling periodic orbits passing nearby the poles $e_1, e_3,e_2,e_1$ and $-e_1, -e_3,-e_2,-e_1$. 
\end{itemize}
\end{theo}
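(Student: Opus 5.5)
The plan is to reduce everything to the one-dimensional return map $\Pi_d$ on $I_A\cup I_B\cup I_C\cup I_D$ given by Lemma~\ref{lem:poincaremap}, with $\delta=1/\epsilon$, $K_+(d)>0$, $K_-(d)<0$ and $|K_\pm(d)|<1$. The topological-equivalence hypothesis lets me transfer every fixed or periodic point of $\Pi_d$, and its stability type, to a periodic orbit of (\ref{eq:edosphere}) near the broken network. The invariant sphere theorem then carries these orbits to (\ref{eq:edoring}).

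\emph{Step 1: symbolic dynamics.} For each of the four branches I will determine the sign of the image, since the sign decides which incoming section is hit next. For example, $\Pi^A_d(x)=K_+x^\alpha+d$ has the sign of $d$ for $x$ small, whereas $\Pi^C_d(x)=K_-x^\alpha+d$ may change sign at $x^\alpha=d/|K_-|$. This gives, for each sign of $d$, the transition graph between the labels $A,B,C,D$. I then isolate the closed loops that a small-amplitude orbit can follow; all other points either leave the neighbourhood or are funnelled into such a loop.

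\emph{Step 2: fixed points of the loop compositions.} For each closed loop I write the composed map in a single coordinate. On a loop this is either $x\mapsto K|x|^\alpha+d$ or the orientation-reversing form $y\mapsto d-|K|y^\alpha$ after setting $y=-x$. I solve the fixed-point equation by the intermediate value theorem on an interval of size comparable to $|d|$. For longer loops I use the intermediate value theorem on the composition, using monotonicity of each branch to obtain uniqueness. Stability follows from the derivative $\alpha|K|\,|x|^{\alpha-1}$ evaluated at a fixed point of size $\sim|d|$:
\begin{itemize}
\item if $\alpha>1$, the derivative tends to $0$ as $d\to 0$, so the orbit is attracting;
\item if $\alpha<1$, the derivative blows up, so the orbit is repelling.
\end{itemize}
When $\alpha<1$ I must also show that loops with the wrong sign pattern have no fixed point inside the domain. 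Any such fixed point sits at scale $|K|^{1/(1-\alpha)}$, and I will exclude it by choosing $\nu$ smaller than this scale. This is the same choice that makes the map well defined at $d=0$.

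\emph{Step 3: lifting to $S^2$.} A loop lying in the annulus (labels $A$, $I_+$ glued preserving orientation) corresponds to the same-sign connections. A loop lying in the Möbius band ($I_-$ glued reversing orientation) corresponds to the sign-changing connections. I lift each periodic point of $\Pi_d$ through $Z$ and $S$ as in the construction of $O^1_\pm$. This produces the listed itineraries:
\begin{itemize}
\item an annulus loop gives two orbits $\pm(e_1,e_3,e_2)$, exchanged by $S$;
\item a Möbius loop, being $S$-invariant, gives one orbit visiting all six poles;
\item a composite period-two loop alternating between the two pieces gives one $12$-pole orbit.
\end{itemize}
Counting these orbits yields three, one, one and two in the four cases respectively.

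The main obstacle I expect is the sign and orientation bookkeeping in Step 1. The sign conventions of the branches $\Pi^B_d,\Pi^D_d$ (which include the identification through $S$), together with the label identifications from Figure~\ref{fig:poincaremap}, must be tracked carefully to obtain exactly the four itineraries in the statement. In particular, for $d<0$ with $\alpha>1$ I need to identify the composite loop that passes through both the annulus and the Möbius band. To do this I would first tabulate the image sign of each branch for $x$ near $0$ and near $\pm\nu$, and then read off the loops, before attempting any fixed-point estimates.
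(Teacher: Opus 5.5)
Your plan is essentially the paper's proof. Both read off the transitions between $I_A,\dots,I_D$ from the signs of $\Pi^A_d,\dots,\Pi^D_d$, locate the relevant fixed points of $\Pi^A_d$, $\Pi^D_d$ and $\Pi^C_d\circ\Pi^B_d$ (the paper uses contraction and nested-preimage arguments where you use the intermediate value theorem, monotonicity and the derivative), and lift through $Z$ and $S$ to get the counts $3,1,1,2$.

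Two small slips, neither affecting the conclusion. First, for $\alpha<1$ the fixed points sit at scale $(|d|/|K_\pm|)^{1/\alpha}\ll|d|$ rather than $\sim|d|$. Second, taking $\nu$ below $|K|^{1/(1-\alpha)}$ is not the choice that makes the map well defined (for $\alpha<1$, $K\nu^\alpha<\nu$ actually requires $\nu>K^{1/(1-\alpha)}$). It is a separate, legitimate shrinking of the neighbourhood, which the paper also makes implicitly when it says orbits from $I_A$ go away from the connection.
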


\begin{proof}

It follows from the invariant sphere theorem that the system (\ref{eq:edoring}) is topologically equivalent to the system (\ref{eq:edosphere}) on the attracting invariant sphere $S^2$. 
So, the proof follows from a carefully study of the possible compositions of the Poincaré maps $\Pi^{A}_{d}$, $\Pi^{B}_{d}$, $\Pi^{C}_{d}$ and $\Pi^{D}_{d}$ in the $4$  different cases. 

First, we fix $\alpha>1$ and $d>0$.
See Figure~\ref{fig:3_periodic_orbit_}.
Note that  $\Pi^{A}_{d}(x)>0$ for every $x>0$. 
So the trajectories starting in $I_A$ always cross $O_A\approx I_A$.
Moreover, since $0<K_+(d)<1$ and $\alpha>1$, the function $\Pi^{A}_{d}$ is a contraction map. 
Thus, there is a unique attracting fixed point of $\Pi^{A}_{d}$ which corresponds, using the antipodal symmetry, to two attracting periodic orbits of the system (\ref{eq:edosphere}) close to the destroyed heteroclinic network.
Since $\Pi^{B}_{d}(x)<0$, the trajectories starting in $I_B$ always cross $O_D\approx I_D$.
The trajectories starting in $I_C$ cross $O_A\approx I_A$ since $\Pi^{C}_{d}(x)>0$ for every $x>0$ sufficiently small. 
For the last section $I_D$, the trajectories cross $O_D\approx I_D$ since $\Pi^{D}_{d}(x)<0$ for every $x<0$ sufficiently small.
Furthermore, the map $\Pi^{D}_{d}$ is a contraction map since $-1<K_-(d)<0$ and $\alpha>1$.
Thus, there is a unique attracting fixed point of $\Pi^{D}_{d}$.
(In the Möbius strip, this periodic orbit makes a single loop around the strip, however the orbits approaching it do it from both sides.)
The corresponding periodic orbit  must be invariant by the antipodal symmetry, otherwise there would be two intersection periodic orbits, which is absurd. 
So there are two periodic orbits in the fundamental domain as plotted on the left side of Figure~\ref{fig:3_periodic_orbit_}.
These periodic orbits correspond to attracting periodic orbits of the system (\ref{eq:edosphere}) close to the destroyed heteroclinic network.
These periodic orbits are similar to the ones plotted on the right side of Figure~\ref{fig:3_periodic_orbit_}.

The orbits starting in $I_B$ close to the heteroclinic network are attracted by this periodic orbit, because $\Pi^{D}_{d}(d)<|d|+d<0$.
In short, the invariant sphere has three attracting periodic orbits close to the destroyed heteroclinic network for $\alpha>1$ and $d<0$. They follow the following points: $e_1, e_3,e_2, e_1$; $-e_1, -e_3,-e_2, -e_1$; and $e_1, -e_3, e_2, -e_1, e_3, -e_2, e_1$.

\begin{figure}[ht]
	\centering
\begin{tikzpicture}[scale=0.4, purplecurve/.style={line width=1.6pt,blue}, yellowcurve/.style={line width=1.6pt,green} ] 
\draw (0,0) node[above left] {$e_{1}$}; 
\draw[->] (-4,0) -- (4,0) node[right] {$e_{2}$}; 
\draw[->] (0,-4) -- (0,4) node[above] {$e_{3}$}; 

\draw (3.5,1.5) node {$A$}; 
\draw (3.5,-1.5) node {$B$};
\draw (-3.5,1.5) node {$C$}; 
\draw (-3.5,-1.5) node {$D$};

\draw (0.8,3.5) node[right] {$A$}; 
\draw (-0.8,3.5) node[left] {$B$};
\draw (0.8,-3.5) node[right] {$D$}; 
\draw (-0.8,-3.5) node[left] {$C$};

\draw (-1,3) -- (1,3); 
\draw (-1,-3) -- (1,-3); 
\draw (-3.0,-1) -- (-3.0,1); 
\draw (3.0,-1) -- (3.0,1); 
\draw[dashed] (0.5,-3.0) -- (0.5,3);
\draw[yellowcurve] (3,0.7) to[out=180,in=270] (0.7,3); 
\draw[purplecurve] (-3,-0.3) to[out=0,in=90] (0.3,-3); 
\end{tikzpicture}
		\includegraphics[width=0.30\textwidth]{"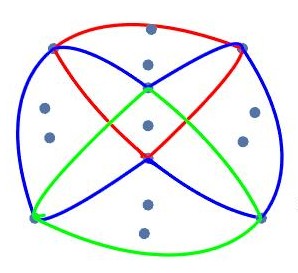"}
	\caption{Dynamics around the destroyed homoclinic/heteroclinic network for $\alpha>1$ and $d>0$. Left: periodic orbits in the fundamental domain. Right: three periodic orbits in the invariant sphere ($a=-1$, $b=1/4$, $c=-3$, $d=0.001$ and $\lambda=1$).}	
	\label{fig:3_periodic_orbit_}
\end{figure}

Next, we consider $\alpha>1$ and $d<0$.
See Figure~\ref{fig:periodic_orbit_alpha_gt_1_d_neg}.
Note that $\Pi^{A}_{d}(x)<0$ for every $x>0$ small enough. 
Thus, every trajectory starting in $I_A$ close to the heteroclinic network crosses $O_B\approx I_B$.
The trajectories starting in $I_D$ cross $O_C\approx I_C$ since $\Pi^{D}_{d}(x)>-d>0$ where $x<0$.
For the section $I_C$, we have that $\Pi^{C}_{d}(x)<d<0$ where $x>0$.
So the trajectories starting in $I_C$ cross $O_B\approx I_B$.
Now, we study the dynamic on the section $I_B$.
Note that $\Pi^{B}_{d}(x)>0$ if and only if $0>x>y^*$, where $y^*=-(-d/K_+(d))^{1/\alpha}$. 
And the trajectories starting in $I_B$ close to the heteroclinic connection cross the section $O_C\approx I_C$ which returns to the cross section $O_B\approx I_B$.
We check that the composition $\Pi^{C}\circ \Pi^{B}_{d}$ is invariant by the interval $]y^*,0[$.
We have that
$$x\in ]y^*,0[ \Rightarrow \Pi^{B}_{d}(x)\in ]0,-d[ \Rightarrow \Pi^{C}_{d}(\Pi^{B}_{d}(x))\in ]\Pi^{C}_{d}(-d),d[.$$
We need to check that $\Pi^{C}_{d}(-d)>y^*$ for small values of $d<0$.

Taking $\delta=1/\epsilon$ and $\tilde{d}=-d$, we have that $\Pi^{C}_{d}(-d)>y^*$ is equivalent to 
$$ \left(-\frac{d}{\epsilon^{\alpha+1}-\epsilon^{\alpha} d}\right)^{1/\alpha}>(\epsilon^{\alpha+1}+\epsilon^{\alpha} d)(-d)^\alpha  - d
\Leftrightarrow 
\left(\frac{\tilde{d}}{\epsilon^{\alpha+1}+\epsilon^{\alpha} \tilde{d}}\right)^{1/\alpha}>(\epsilon-\tilde{d})(\epsilon\tilde{d})^\alpha +\tilde{d}
$$
$$\Leftrightarrow 
\tilde{d}-\epsilon^{\alpha}(\epsilon+\tilde{d})((\epsilon- \tilde{d})(\epsilon\tilde{d})^\alpha +\tilde{d})^{\alpha}>0\Leftrightarrow 
h(\tilde{d})>0.
$$
for small values of $\tilde{d}>0$, where 
$h(\tilde{d})=\tilde{d}-\epsilon^{\alpha}(\epsilon+ \tilde{d})((\epsilon- \tilde{d})(\epsilon\tilde{d})^\alpha +\tilde{d})^{\alpha}$.
Note that $h(0)=0$ and
$$h'(\tilde{d})=1-\epsilon^{\alpha}((\epsilon- \tilde{d})(\epsilon\tilde{d})^\alpha +\tilde{d})^{\alpha}-\alpha\epsilon^{\alpha}(\epsilon+ \tilde{d})((\epsilon- \tilde{d})(\epsilon\tilde{d})^\alpha +\tilde{d})^{\alpha-1}(1-(\epsilon\tilde{d})^\alpha +\alpha \epsilon(\epsilon- \tilde{d})(\epsilon\tilde{d})^{\alpha-1})>0.$$
 for small values of $\tilde{d}>0$, since $\alpha>1$. 
Then $h(\tilde{d})>0$, for small values of $\tilde{d}>0$ which means that $\Pi^{C}_{d}(-d)>y^*$ for small values of $d<0$.
 Moreover, since $0<K_+(d)<1$ and $\alpha>1$, the function $\Pi^{C}\circ \Pi^{B}_{d}$ is a contraction map in this interval.
Thus, the composition $\Pi^{C}\circ \Pi^{B}_{d}$ has an attracting fixed point which corresponds to a periodic orbit in the fundamental domain, as displayed in the left side of Figure~\ref{fig:periodic_orbit_alpha_gt_1_d_neg}.
This periodic orbit is lifted to the sphere as an attracting periodic orbit of the system (\ref{eq:edosphere}) close to the destroyed heteroclinic network as displayed in the Center of Figure~\ref{fig:periodic_orbit_alpha_gt_1_d_neg}.
This periodic orbit is illustrated in Figure~\ref{fig:periodic_orbit_alpha_gt_1_d_neg}.
And it passes close to the ``poles" in the following order: $e_1, -e_3, -e_2, e_1,e_3, -e_2, -e_1, e_3, e_2, -e_1, -e_3, e_2, e_1$. 
The periodic orbit passes closes to each equilibria twice before completes its period.
See the right side of Figure~\ref{fig:periodic_orbit_alpha_gt_1_d_neg} for a zoom around one equilibrium point of the destroyed heteroclinic network.
Moreover, the trajectories starting in $I_A$ and $I_D$ close to the heteroclinic network are also attracted by this periodic orbit, since $\Pi^{A}_{d}(x)>d>\Pi^{C}_{d}(-d)>y^*$ and $\Pi^{D}_d(x)>-d \Rightarrow \Pi_d^{C}(\Pi^{D}_d(x))>\Pi^{C}_{d}(-d)>y^*$.

\begin{figure}[ht]
	\centering
	\begin{tikzpicture}[scale=0.4, purplecurve/.style={line width=1.6pt,green}, yellowcurve/.style={line width=1.6pt,yellow!80!orange} ] 
\draw (0,0) node[above left] {$e_{1}$}; 
\draw[->] (-4,0) -- (4,0) node[right] {$e_{2}$}; 
\draw[->] (0,-4) -- (0,4) node[above] {$e_{3}$}; 

\draw (3.5,1.5) node {$A$}; 
\draw (3.5,-1.5) node {$B$};
\draw (-3.5,1.5) node {$C$}; 
\draw (-3.5,-1.5) node {$D$};

\draw (0.8,3.5) node[right] {$A$}; 
\draw (-0.8,3.5) node[left] {$B$};
\draw (0.8,-3.5) node[right] {$D$}; 
\draw (-0.8,-3.5) node[left] {$C$};
\draw (-1,3) -- (1,3); 
\draw (-1,-3) -- (1,-3); 
\draw (-3.0,-1) -- (-3.0,1); 
\draw (3.0,-1) -- (3.0,1); 
\draw[dashed] (-0.5,-3.0) -- (-0.5,3);
\draw[purplecurve] (-3, 0.3) to [out=0,in=270] (-0.7,3); 
\draw[purplecurve] (3,-0.7) to[out=180,in=90] (-0.3,-3); 
\end{tikzpicture}
		\includegraphics[width=0.30\textwidth]{"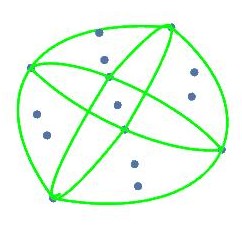"}
		\includegraphics[width=0.20\textwidth]{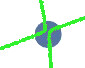}
	\caption{Dynamics around the destroyed homoclinic/heteroclinic network for $\alpha>1$ and $d<0$. Left: peridoic orbit in the fundamental domain. Center: periodic orbit in the invariant sphere ($a=-1$, $b=1/4$, $c=-3$, $d=-0.001$ and $\lambda=1$). Right: Zoom around one equilibrium point of the destroyed heteroclinic network.}	
	\label{fig:periodic_orbit_alpha_gt_1_d_neg}
\end{figure}

Now, we consider $\alpha<1$ and $d>0$.
Note that $\Pi^{A}_{d}(x)>d>0$ and the trajectories starting in $I_A$ cross $O_A\approx I_A$ going away from the connection.
The trajectories starting in $I_D$ cross $O_D\approx I_D$ when $x$ is sufficiently small.
Moreover, there exists a fixed point of $\Pi^{D}$ because $(\Pi^{D})^{-1}(]-y^*,0[)\subset]-y^*,0[$ and every pre image reduces the diameter of the interval, where $y^*_-=(\Pi^{D})^{-1}(0)=-(-d/K_-(d))^{1/\alpha}$.
This fixed point is repelling since $\alpha<1$ and it corresponds to a repelling periodic orbit of the system (\ref{eq:edosphere}) close to the destroyed heteroclinic connection. 
And it follows the following order: $e_1, -e_3, e_2, -e_1, e_3, -e_2, e_1$.

The trajectories that start in $I_C$ close to the network cross $O_A\approx I_A$ and go away from the destroyed connection, $\Pi^{C}(x)>0$ for $x$ sufficiently small.
Finally, for the section $I_B$, the trajectories cross $O_D\approx I_D$ when $x$ is sufficiently small, because $\Pi^{B}(x)<-d$.
So when the trajectories hit the cross section $O_D$, they are not arbitrary close to the destroyed connection.
If the trajectory is still in a neighbourhood of the destroyed heteroclinic network, then we apply the map $\Pi^{D}_{d}$.
As $\Pi^{D}_{d}(-d)>0$ for $d$ sufficiently small, the trajectories will then cross the outgoing connection $O_C\approx I_C$.
And they will leave the neighbourhood of the homoclinic orbit via $I_A$.

\begin{figure}
	\centering
\begin{tikzpicture}[scale=0.4, purplecurve/.style={line width=1.6pt,violet}, yellowcurve/.style={line width=1.6pt,yellow!80!orange} ] 
\draw (0,0) node[above right] {$e_{1}$}; 
\draw[->] (-4,0) -- (4,0) node[right] {$e_{2}$}; 
\draw[->] (0,-4) -- (0,4) node[above] {$e_{3}$}; 
\draw (3.5,1.5) node {$A$}; 
\draw (3.5,-1.5) node {$B$};
\draw (-3.5,1.5) node {$C$}; 
\draw (-3.5,-1.5) node {$D$};

\draw (0.8,3.5) node[right] {$A$}; 
\draw (-0.8,3.5) node[left] {$B$};
\draw (0.8,-3.5) node[right] {$D$}; 
\draw (-0.8,-3.5) node[left] {$C$};

\draw (-1,3) -- (1,3); 
\draw (-1,-3) -- (1,-3); 
\draw (-3.0,-1) -- (-3.0,1); 
\draw (3.0,-1) -- (3.0,1); 
\draw[dashed] (0.5,-3.0) -- (0.5,3);
\draw[yellowcurve] (-3,-0.3) to[out=0,in=90] (0.3,-3); 
\end{tikzpicture}
	\begin{tikzpicture}[scale=0.4, purplecurve/.style={line width=1.6pt,violet}, yellowcurve/.style={line width=1.6pt,yellow!80!orange} ] 
\draw (0,0) node[above right] {$e_{1}$}; 
\draw[->] (-4,0) -- (4,0) node[right] {$e_{2}$}; 
\draw[->] (0,-4) -- (0,4) node[above] {$e_{3}$}; 
\draw (3.5,1.5) node {$A$}; 
\draw (3.5,-1.5) node {$B$};
\draw (-3.5,1.5) node {$C$}; 
\draw (-3.5,-1.5) node {$D$};

\draw (0.8,3.5) node[right] {$A$}; 
\draw (-0.8,3.5) node[left] {$B$};
\draw (0.8,-3.5) node[right] {$D$}; 
\draw (-0.8,-3.5) node[left] {$C$};

\draw (-1,3) -- (1,3); 
\draw (-1,-3) -- (1,-3); 
\draw (-3.0,-1) -- (-3.0,1); 
\draw (3.0,-1) -- (3.0,1); 
\draw[dashed] (-0.5,-3.0) -- (-0.5,3);
\draw[yellowcurve] (3,0.3) to[out=180,in=-90] (0.3,3); 
\end{tikzpicture}
\caption{For $\alpha<1$, there is one fixed point of the local Poincaré maps. Left side displays the case $d>0$, and the right corresponds to $d<0$.}	
	\label{fig:periodic_orbit_alpha_ls_1_d_neg}
\end{figure}

Last, take $\alpha<1$ and $d<0$.
For trajectories starting in $I_A$, we have that $\Pi^{A}_{d}(x)<0$ for every $x>0$ sufficiently small and they cross the section $O_B\approx I_B$.
However, $x$ must be exceptional close to zero to $\Pi^{A}_{d}(x)$ be negative.
If $x>(-d/K_+(d))^{1/\alpha}$, the trajectory goes back to $O_A$. 
Since $\alpha<1$, then $(-d/K_+(d))^{1/\alpha}$ is sufficiently small to be in the neighbourhood of the destroyed connection.
And some of the trajectories starting in $I_A$ return to $O_A\approx I_A$ again.
For $d$ small enough, we have that 
$$](-d/K_+(d))^{1/\alpha}, (-2d/K_+(d))^{1/\alpha}[\subset \Pi^{A}_{d}(](-d/K_+(d))^{1/\alpha}, (-2d/K_+(d))^{1/\alpha}[)=]0,-d[.$$
Since $\Pi^{A}_{d}$ is expanding, the successive preimages of this interval are getting closer to the fixed point of $\Pi^{A}_{d}$.
This fixed point is repelling since $\alpha<1$.
Thus, there are two repelling periodic orbits close to the destroyed heteroclinic network that follow the equilibrium: $e_1, e_3,e_2,e_1$ and $-e_1, -e_3,-e_2,-e_1$.
\end{proof}

We finish with the following remark about the arc length of the merging periodic orbits when $d\to 0$. 
We define the arc length of the periodic orbit by the number of times that the trajectory passes close to the poles.
For example, the periodic orbit given in Theorem~\ref{theo:periodic_orbits} that passes close to the poles in the following order: $e_1, -e_3, -e_2, e_1,e_3, -e_2, -e_1, e_3, e_2, -e_1, -e_3, e_2, e_1$ has arc length equal to 12. 
We say that the heteroclinic network has arc length equal to the number of heteroclinic connections that it has, which are $12$ for $n=3$.

\begin{rem}
For $d$ sufficiently small, the periodic orbits given in Theorem~\ref{theo:periodic_orbits} are close to the destroyed heteroclinic network.

If $\alpha>1$, the total arc length of the attracting periodic orbits converges to the arc length of the heteroclinic network, $12$, when $d\to 0$.
\begin{itemize}
\item If $\alpha>1$ and $d>0$, then the three periodic orbits have arc length $3$, $3$, and $6$. 
\item If $\alpha>1$ and $d<0$, then the arc length of the periodic orbit converges to $12$.
\end{itemize}

If $\alpha<1$, the total arc length of the periodic orbits converges to half the arc length of the heteroclinic network $6$, when $d\to 0$.
\begin{itemize}
\item If $\alpha<1$ and $d>0$, then the one repelling periodic orbit has arc length $6$.
\item If $\alpha<1$ and $d<0$, then each of the two repelling periodic orbits has arc length $3$.
\end{itemize}
\end{rem}

\bibliographystyle{plain}

\end{document}